\documentclass[12pt,a4paper,leqno]{amsart}

\usepackage{amssymb,hyperref}
\usepackage{array,float}
\usepackage{amsmath,amsthm,amsbsy,amscd,mathrsfs}
\usepackage{graphicx}
\usepackage{caption}
\usepackage{cite}

\usepackage{mathtools}

\newcommand{\Mod}[1]{\ (\mathrm{mod}\ #1)}

\DeclareMathOperator{\Pic}{Pic}
\DeclareMathOperator{\Div}{Div}
\DeclareMathOperator{\To}{T^0}

\DeclareMathOperator{\Tr}{Tr}  
\DeclareMathOperator{\co}{covol}

\headheight=5pt \textheight=600pt \textwidth=450pt \topmargin=14pt \oddsidemargin=6pt\evensidemargin=6pt
\makeatletter
\newcommand\suchthat{%
 \@ifstar
  {\mathrel{}\middle|\mathrel{}}
  {\mid}%
}
\makeatother






\theoremstyle{plain}
 \newtheorem{theorem}{Theorem}[section]
 \newtheorem{proposition}{Proposition}[section]
 \newtheorem{lemma}{Lemma}[section]
 \newtheorem{corollary}{Corollary}[section]
\theoremstyle{definition}

\theoremstyle{remark}
\newtheorem{remark}{Remark}[section] 
\numberwithin{equation}{section}

 \newtheorem{definition}{Definition}[section]

\title[THE SIZE FUNCTION FOR CYCLIC CUBIC FIELDS]{THE SIZE FUNCTION FOR CYCLIC CUBIC FIELDS}

\author[Ha Thanh Nguyen Tran]{Ha Thanh Nguyen Tran} 
\address{Department of Mathematics and Statistics,
University of Calgary\\
2500 University Drive NW\\
Calgary,  Alberta, Canada T2N 1N4.}
\email{hatran1104@gmail.com}

\author[Peng Tian]{Peng Tian} 
\address{Department of Mathematics,
	East China University of Science and Technology\\
	Meilong Road 130\\
	200237, Shanghai, P. R. China.}
\email{tianpeng@ecust.edu.cn}


\keywords{Arakelov divisor; size function; cyclic cubic field; hexagonal lattice; unit lattice}


\begin{document}

\maketitle

\begin{abstract}
	The size function for a number field is an analogue of the dimension of the Riemann-Roch spaces of divisors on an algebraic curve. It was conjectured to attain its maximum at the the trivial class of Arakelov divisors. This conjecture was proved for many number fields with unit groups of rank one. Our research confirms that the conjecture also holds for cyclic cubic fields, which have unit groups of rank two. 
\end{abstract}


\section{Introduction}\label{sec1a}
The function $h^0$ for a number field $F$ was introduced in \cite{ref:3}, which is also called the ``size function"  for $F$ (see \cite{ref:14,ref:15,ref:27,ref:21}). This function is well defined on the Arakelov class group $\Pic^0_F$ of $F$ (see \cite{ref:4}). 
Concerning the maximality of $h^0$, the following conjecture was proposed \cite{ref:3}.

\textit{Conjecture}. 
Let $F$ be a number field that is Galois over $\mathbb{Q}$ or over an imaginary quadratic number field. Then the function $h^0$ on $\Pic^0_F$ assumes its maximum on the trivial class $O_F$ where $O_F$ is the ring of integers of $F$.

The conclusion of this conjecture holds for 
quadratic fields \cite{ref:14}, certain pure cubic fields \cite{ref:15} and quadratic extensions of complex quadratic fields \cite{Tran2}. In this paper,  we prove that this conjecture also holds for all cyclic cubic fields. We remark that, in contrast to the above-cited works, in the case we handle here the unit group has rank two, rather than rank one. Explicitly, we will prove the following theorem.

\begin{theorem}\label{thmmain}	
	Let $F$ be a cyclic cubic field. Then the function $h^0$ on $\Pic^0_F$ has its unique global maximum at the trivial class $D_0=(O_F, 1)$. 	
\end{theorem}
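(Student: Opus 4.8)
The plan is to combine the description of $h^{0}$ as the logarithm of a Gaussian lattice sum with the special arithmetic of cyclic cubic fields, whose unit lattices turn out to be hexagonal. Write a degree-zero Arakelov divisor as $D=(\mathfrak a,u)$ with $\mathfrak a$ a fractional ideal and $u\in\mathbb R_{>0}^{3}$; then $e^{h^{0}(D)}=\sum_{f\in\mathfrak a}\exp\bigl(-\pi\sum_{i}u_{i}^{2}\sigma_{i}(f)^{2}\bigr)$ is the theta series of the ideal lattice $\Lambda_{D}=\diag(u_{1},u_{2},u_{3})\,\mathfrak a\subset\mathbb R^{3}$, all of whose members (for $\deg D=0$) have covolume $\co(\Lambda_{D})=\sqrt{d_{F}}$, so that $e^{h^{0}(D_{0})}\ge 1+2e^{-3\pi}$ from the terms $f=0,\pm1$. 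The group $\Pic^{0}_{F}$ is the disjoint union over $\mathrm{Cl}_{F}$ of translates of the torus $\To=V_{0}/\Lambda$, where $V_{0}=\{x\in\mathbb R^{3}:x_{1}+x_{2}+x_{3}=0\}$ and $\Lambda=\{(\log|\sigma_{1}\varepsilon|,\log|\sigma_{2}\varepsilon|,\log|\sigma_{3}\varepsilon|):\varepsilon\in O_{F}^{\times}\}$ is the logarithmic unit lattice. A generator $\sigma$ of $\mathrm{Gal}(F/\QQ)$ cyclically permutes the $\sigma_{i}$, hence acts on $V_{0}$ as rotation by $120^{\circ}$, and preserves $\Lambda$; since a rank-two lattice invariant under a $120^{\circ}$ rotation is homothetic to the hexagonal lattice $A_{2}$ (a shortest vector $v$ of $\Lambda$ together with $\sigma v$ spans a hexagonal sublattice of covering radius $|v|/\sqrt3<|v|$, so $\Lambda$ cannot be larger), $\Lambda$ is hexagonal. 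Because $\|\varepsilon\|^{2}=\sum_{i}e^{2\log|\sigma_{i}\varepsilon|}$, a shortest vector of $\Lambda$ has squared length $2R_{F}$ and the covering radius of $\Lambda$ equals $\sqrt{2R_{F}/3}$.

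The first step is to reduce to the connected component of $D_{0}$, i.e.\ to divisors with $\mathfrak a$ principal. If $\mathfrak a$ lies in a nontrivial class then, by the arithmetic--geometric mean inequality, every $0\ne f\in\mathfrak a$ satisfies $\sum_{i}u_{i}^{2}\sigma_{i}(f)^{2}\ge 3\,N(\mathfrak a)^{-2/3}|N(f)|^{2/3}\ge 3\cdot 2^{2/3}$, using $\prod_i u_i=N(\mathfrak a)^{-1}$ and $|N(f)|\ge N(\mathfrak a)N(\mathfrak c_{0})$ with $\mathfrak c_{0}$ an integral ideal of least norm in the class $[\mathfrak a]^{-1}$. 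So the first minimum of $\Lambda_{D}$ is at least $3\cdot 2^{2/3}$, and a routine lattice-point count (balls of radius $\tfrac12\lambda_{1}$ are disjoint) gives $\sum_{0\ne v\in\Lambda_{D}}e^{-\pi\|v\|^{2}}<2e^{-3\pi}$, whence $e^{h^{0}(D)}<1+2e^{-3\pi}\le e^{h^{0}(D_{0})}$ and the maximum of $h^{0}$ is attained on the principal component. (Compatibly, $h^{0}$ is $\mathrm{Gal}(F/\QQ)$-invariant, so the set of maximizers is Galois-stable; on the principal component the Galois-fixed points are $D_{0}$ together with the two ``deep hole'' divisors of the hexagonal torus.)

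On the principal component, parametrize by $x\in V_{0}$ via $D_{x}=(O_{F},e^{x})$, so that $\phi(x):=e^{h^{0}(D_{x})}=\sum_{f\in O_{F}}\exp\bigl(-\pi\sum_{i}\sigma_{i}(f)^{2}e^{2x_{i}}\bigr)$ is a $\Lambda$-periodic, $\mathrm{Gal}(F/\QQ)$-invariant (hence $\ZZ/3$-symmetric) function on the hexagonal torus $\To$; note that, in contrast to the rank-one cases treated previously, $\phi$ has \emph{no} $x\mapsto-x$ symmetry, since $\mathrm{Gal}(F/\QQ)$ has no involution. Grouping terms by the ideal $(f)$ gives $\phi(x)=1+\sum_{\mathfrak a}\Theta\bigl(x+b_{\mathfrak a};N(\mathfrak a)^{2/3}\bigr)$ over the nonzero principal ideals, where $g(y)=e^{2y_{1}}+e^{2y_{2}}+e^{2y_{3}}\ge 3$, $\Theta(z;s)=2\sum_{\lambda\in\Lambda}e^{-\pi s\,g(z-\lambda)}$, and $b_{\mathfrak a}\in V_{0}$ depends on a chosen generator. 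The term $\mathfrak a=O_{F}$ is dominant, $\Psi(x):=\Theta(x;1)=2\sum_{\lambda\in\Lambda}e^{-\pi g(x-\lambda)}$ --- the $\Lambda$-periodization of the sharply peaked $\ZZ/3$-symmetric bump $e^{-\pi g(\cdot)}$, which attains $e^{-3\pi}$ exactly on $\Lambda$ --- whereas every other term carries an exponential factor $e^{-3\pi N(\mathfrak a)^{2/3}}\le e^{-3\pi\cdot2^{2/3}}$ and the full remainder $\sum_{\mathfrak a\ne O_{F}}\Theta(\cdot\,;N(\mathfrak a)^{2/3})$ is bounded on $\To$ by a constant far smaller than $2e^{-3\pi}$.

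It then remains to prove $\phi(x)<\phi(0)$ for $x\in\To\setminus\{0\}$. After passing to a fundamental domain and using the $\ZZ/3$-symmetry, one works in one sixth of the Voronoi hexagon of $\Lambda$. Near $0$ the Hessian of $\log\phi$ on $V_{0}$ is negative definite: it equals $\phi(0)^{-1}$ times $-\tfrac{4\pi}{3}M\,I+4\pi^{2}B$ restricted to $V_{0}$, where $M=\sum_{f}\|f\|^{2}e^{-\pi\|f\|^{2}}\ge 6e^{-3\pi}$ and $B=\sum_{f}e^{-\pi\|f\|^{2}}v_{f}v_{f}^{\mathsf{T}}$ with $v_{f}=(\sigma_{i}(f)^{2})_{i}$; the crucial point is that the dominant terms $f=\pm1$ (with $v_{\pm1}=(1,1,1)$) contribute to $M$ but nothing to $B|_{V_{0}}$, since $(1,1,1)\perp V_{0}$, so $B|_{V_{0}}$ is controlled by the much smaller $f\ne\pm1$ terms and $D_{0}$ is a strict local maximum. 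The remaining $\ZZ/3$-fixed points are the two deep holes of $\Lambda$ (the fixed-point set being $(R-I)^{-1}\Lambda/\Lambda$, of order $|\det(R-I)|=3$), and at the deep holes and saddle points --- indeed throughout the punctured fundamental domain --- every point of $\Lambda$ lies at distance at least the covering radius $\sqrt{2R_{F}/3}$ from $x$, so $g$ on the relevant translates exceeds $3+c\cdot\tfrac{2R_{F}}{3}$ for an explicit $c>0$; combined with the uniform smallness of the non-unit contributions and with honest bookkeeping of the (at most six) nearby lattice points, this yields $\phi(x)<1+2e^{-3\pi}\le\phi(0)$ there. The crux --- which I expect to be the main obstacle --- is making this last estimate uniform and fully rigorous, because it is tightest precisely for the cyclic cubic fields of smallest regulator (conductors $7$, $9$ and a handful of others), where the hexagonal unit lattice is densest and its deep holes shallowest; those finitely many fields will probably require a separate, more careful (possibly computer-assisted) verification, whereas for all larger conductors the regulator, hence the covering radius, is comfortably large. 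Once $\phi<\phi(0)$ is established on the punctured fundamental domain, Theorem~\ref{thmmain} follows.
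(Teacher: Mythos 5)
Your global skeleton matches the paper's: reduce to the principal component via the AM--GM bound $\|uf\|^2\ge 3|N(uf)|^{2/3}\ge 3\cdot 2^{2/3}$ plus a theta-tail estimate, observe that the unit lattice $\Lambda$ is hexagonal because $\sigma$ acts as an order-three isometry, and then study the $\Lambda$-periodic, $\sigma$-invariant function on the torus. Your Hessian computation at the origin (the $f=\pm1$ terms feed $M$ but contribute nothing to $B|_{V_0}$ since $(1,1,1)\perp V_0$) is correct in spirit and is essentially the second-order shadow of the paper's mechanism, where Galois-averaging kills the linear term in the Taylor expansion. But as a proof it has a genuine gap exactly where the theorem is hard. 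A negative definite Hessian at the single point $0$ only gives a strict local maximum on an unquantified neighborhood; it does not control the region of small but not infinitesimal $\|w\|$, which is precisely where the naive bound fails (there $S_1$ approaches $2e^{-3\pi}$, the same size as the margin), and which the paper handles by a uniform, fully explicit estimate valid on all of $0<\|w\|<0.170856$: the Galois-averaged difference $3[k^0(D)-k^0(D_0)]=\|w\|^2\sum_f G(u,f)$ (Proposition \ref{equiv}) is split into $T_1+T_2+T_3$ and each piece is bounded with explicit constants (Propositions \ref{Gat1}--\ref{sum3}), crucially using the conductor-dependent lower bound $\|g\|^2\ge(1+2p)/3$ for $g\in O_F\setminus\mathbb{Z}$ (Proposition \ref{minlength}) to control the non-unit terms. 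Your sketch supplies no substitute for this uniform estimate.

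Moreover, your complementary claim that ``throughout the punctured fundamental domain every point of $\Lambda$ lies at distance at least the covering radius $\sqrt{2R_F/3}$ from $x$'' is false as stated: only the deep holes attain the covering radius, and points near $0$ are arbitrarily close to $\Lambda$, so this cannot bridge the gap between the Hessian neighborhood and the far region. Finally, the regime where $\|w\|$ is not short is not merely a matter of ``comfortably large regulator for large conductor'': the delicate cases are the smallest conductors ($p=7,9,13$), and the paper resolves them not by ad hoc computer verification but by explicit lower bounds on $\lambda_1$ (Lemma \ref{lambda1}, via Proposition \ref{minlength} and the simplest cubic fields), the hexagonal geometry of $\Lambda$ bounding the number and distances of nearby units (Lemma \ref{Bomega}), and the case analysis of Section \ref{case2} (including Table \ref{table1}). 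You correctly identify this uniformity as ``the main obstacle,'' but that obstacle is the substance of the proof, so the proposal as it stands is an outline with the central estimates missing rather than a complete argument.
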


In general, the conclusion of this theorem is not true for cubic fields that are not Galois. For instance, it does not hold in the case of the totally real cubic field defined by the polynomial $X^3+X^2-3X-1$.
 
The assumption that $F$ is cyclic Galois is thus important. The Galois property allows us to make use of several invariance properties (see Lemmas \ref{lengf} and \ref{h0sym}) which are crucial in our proofs of Propositions \ref{equiv} and \ref{taylor1}. Moreover, this condition allows for an explicit description of the ring of integers $O_F$ (see Proposition \ref{OF}), and the unit group $O_F^\times$ (see Proposition \ref{hexan}). This allows for the efficient calculation of lower bounds on the lengths of elements of $O_F$ (when viewed as a lattice in $\mathbb{R}^3$, see Proposition \ref{minlength}).

We first introduce Arakelov divisors, the Arakelov class group, the size function $h^0$, and some properties of cyclic cubic fields in Section \ref{sec2}. The proof of Theorem \ref{thmmain} is presented in Sections \ref{case1} and \ref{case2}. Finally, we give a comparison to previous work and then discuss the futher work in Section \ref{sec5}.

\section{Preliminaries}\label{sec2}
From now on, we fix a cyclic cubic field $F$ with $O_F$  the ring of integers and $G= \langle \sigma \rangle$ the Galois group of $F$. Let $p$ be the conductor of $F$. The discriminant of $F$ is $\Delta = p^2$. 

Denote by
$$\mathbb{R}^{\times} = \{\alpha \in \mathbb{R}: \alpha \neq 0 \} \hspace{0.5cm} \text{  and  }  \hspace{0.5cm}  \mathbb{R}^{\times}_{+} = \{\alpha \in \mathbb{R}: \alpha >0 \} .$$

The map $\Phi: F \longrightarrow \mathbb{R}^3$ is defined by 
$$\Phi(f) = (\sigma^i(f))_{0 \le i \le 2} = (f, \sigma(f), \sigma^2(f)) \text{ for all } f \in F.$$
Note that in this paper, we often identify a fractional ideal $I$ of $F$ with its image $\Phi(I)$ that is also a lattice in $\mathbb{R}^3$. Indeed, each $f \in I$ is identified with $\Phi(f) \in \mathbb{R}^3$. Thus $\|f\|^2= \|\Phi(f) \|^2= \sum_{i=0 }^2 |\sigma^i(f)|^2.$
Moreover, a lattice is called hexagonal if it is isometric to the lattice $M \cdot \mathbb{Z}[\zeta_3]$ for some $M \in \mathbb{R}^{\times}_{+}$ and a primitive cube root of unity $\zeta_3$.

\begin{remark}\label{conductor}
	The conductor $p$ of $F$ has the form
	$$p = p_1 p_2 \cdots p_r,$$
	where $r \in \mathbb{Z}_{>0}$ and $p_1, \cdots , p_r$ are distinct integers from the set
	$$ \{9\} \cup  \{q (\text{prime}) \equiv 1\Mod 3\} = \{7, 9, 13, 19, 31, 37, \cdots \}.$$
	See \cite{Hasse} for more details.
\end{remark}

Since $F$ is a cyclic extension, the following fact is easily seen. Note that this result will be used many times in the next sections.

\begin{lemma}\label{lengf}
	Let $f \in F$. Then $\|f \| = \| \sigma(f)\| = \|\sigma^2(f)\|$.
\end{lemma}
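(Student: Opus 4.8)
The plan is to unwind the definition $\|g\|^2=\sum_{i=0}^{2}|\sigma^i(g)|^2$ and to exploit the fact that applying $\sigma$ merely cyclically permutes the list of conjugates $(f,\sigma(f),\sigma^2(f))$, which leaves the Euclidean norm on $\mathbb{R}^3$ unchanged. Concretely, since $G=\langle\sigma\rangle$ has order $3$ we have $\sigma^3=\mathrm{id}$, so for $g=\sigma(f)$ I would write
\[
\|\sigma(f)\|^2=\sum_{i=0}^{2}|\sigma^i(\sigma(f))|^2=\sum_{i=0}^{2}|\sigma^{i+1}(f)|^2=|\sigma(f)|^2+|\sigma^2(f)|^2+|\sigma^3(f)|^2,
\]
and then replace $\sigma^3(f)$ by $f$ to recognise the right-hand side as $|f|^2+|\sigma(f)|^2+|\sigma^2(f)|^2=\|f\|^2$.

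The remaining identity $\|\sigma^2(f)\|=\|f\|$ then follows either by the same re-indexing (now shifting the summation index by $2$ and reducing exponents modulo $3$), or more simply by applying the case already proved to the element $\sigma(f)$ in place of $f$: this gives $\|\sigma^2(f)\|=\|\sigma(\sigma(f))\|=\|\sigma(f)\|=\|f\|$.

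There is essentially no obstacle here; the statement is just the invariance of $\|\cdot\|$ under permutations of the coordinates of $\mathbb{R}^3$. The only point where the hypotheses enter is that $\sigma$ maps $F$ to itself and that $\{\mathrm{id},\sigma,\sigma^2\}$ is exactly the family of maps used to define $\Phi$, so that $\sigma$ acting on $f$ genuinely permutes all three coordinates of $\Phi(f)$ rather than only some of them. (The same computation would in any case go through verbatim with complex absolute values, but a cyclic cubic field is totally real, so all the $\sigma^i(f)$ are real.)
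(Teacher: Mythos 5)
Your proof is correct and is exactly the argument the paper has in mind: since $\sigma^3=\mathrm{id}$, applying $\sigma$ cyclically permutes the three conjugates $f,\sigma(f),\sigma^2(f)$, leaving the sum $\sum_{i=0}^{2}|\sigma^i(\cdot)|^2$ unchanged (the paper states the lemma without proof as "easily seen," and your re-indexing plus the reduction of the second identity to the first is precisely that observation).
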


\begin{proposition}\label{hexan}
	Let $L$ be a lattice of rank two and let $\tau$ be an isometry of this lattice such that $\tau^2 +\tau +1 =0$. Then $L$ is a hexagonal lattice. 
\end{proposition}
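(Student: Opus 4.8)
The plan is to exhibit an explicit hexagonal $\mathbb{Z}$-basis of $L$. Since $x^{2}+x+1$ has no real root, for every nonzero $v\in L$ the vectors $v$ and $\tau(v)$ are linearly independent over $\mathbb{R}$ — otherwise $\tau(v)=\lambda v$ would make $\lambda$ a real root of $x^{2}+x+1$ — so $\{v,\tau(v)\}$ is an $\mathbb{R}$-basis of the plane $V=L\otimes_{\mathbb{Z}}\mathbb{R}$. Fix a nonzero $v\in L$ and put $w=\tau(v)$. Applying the isometry $\tau$ and using $\tau^{2}(v)=-v-w$ gives $\langle v,w\rangle=\langle\tau v,\tau w\rangle=\langle w,-v-w\rangle$, hence $2\langle v,w\rangle=-\|v\|^{2}$; together with $\|w\|=\|v\|$ this shows the Gram matrix of the ordered pair $(v,w)$ equals $\|v\|^{2}\left(\begin{smallmatrix}1&-1/2\\-1/2&1\end{smallmatrix}\right)$, which is exactly the Gram matrix of $\bigl(\|v\|,\ \|v\|\,\zeta_{3}\bigr)$ inside $\mathbb{Z}[\zeta_{3}]\subset\mathbb{C}$.

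The only nontrivial point is that $\{v,w\}$ is a $\mathbb{Z}$-basis of $L$ itself, and not merely of a proper finite-index sublattice; for this I would take $v$ to be a nonzero vector of minimal length in $L$ (such a vector exists since $L$ is discrete). Suppose $L':=\mathbb{Z}v+\mathbb{Z}w$ were a proper sublattice of $L$; then $[L:L']\geq2$, so the half-open fundamental parallelogram $\{\alpha v+\beta w:\alpha,\beta\in[0,1)\}$ of $L'$ contains a point $u\in L$ with $(\alpha,\beta)\neq(0,0)$. By the Gram matrix above, $\|u\|^{2}=(\alpha^{2}-\alpha\beta+\beta^{2})\|v\|^{2}$; writing $\alpha^{2}-\alpha\beta+\beta^{2}=\alpha^{2}+\beta(\beta-\alpha)$ and using the symmetry in $\alpha,\beta$, one gets $\alpha^{2}-\alpha\beta+\beta^{2}\leq\max(\alpha,\beta)^{2}<1$ on $[0,1)^{2}$, so $0<\|u\|<\|v\|$, contradicting the minimality of $\|v\|$. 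Hence $L=\mathbb{Z}v+\mathbb{Z}w$. I expect this step — ruling out a proper sublattice — to be the main obstacle; the rest is formal.

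Finally, the $\mathbb{R}$-linear map $V\to\mathbb{C}$ determined by $v\mapsto\|v\|$ and $w\mapsto\|v\|\,\zeta_{3}$ carries $L=\mathbb{Z}v+\mathbb{Z}w$ onto $\|v\|\cdot\mathbb{Z}[\zeta_{3}]$, and it is an isometry precisely because the two ordered bases have equal Gram matrices; this exhibits $L$ as a hexagonal lattice with $M=\|v\|$, proving the proposition. As an alternative to the shortest-vector argument, one may note that $\tau^{2}+\tau+1=0$ makes $L$ a finitely generated torsion-free module over the principal ideal domain $\mathbb{Z}[\tau]\cong\mathbb{Z}[\zeta_{3}]$; since $L$ has $\mathbb{Z}$-rank $2=\operatorname{rank}_{\mathbb{Z}}\mathbb{Z}[\zeta_{3}]$, it must be free of rank one over $\mathbb{Z}[\zeta_{3}]$, giving a generator $v$ with $L=\mathbb{Z}v+\mathbb{Z}\tau(v)$ directly, after which the same Gram-matrix computation concludes.
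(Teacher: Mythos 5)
Your proof is correct, and your main route is genuinely different from the paper's. The paper argues module-theoretically: $\tau^2+\tau+1=0$ makes $L$ a module over $\mathbb{Z}[\tau]/(\tau^2+\tau+1)\cong\mathbb{Z}[\zeta_3]$, which is a PID, so $L$ is free of rank one; a generator $\omega$ then yields the isometry $M\cdot\mathbb{Z}[\zeta_3]\to L$, $M(a+b\zeta_3)\mapsto a\omega+b\tau(\omega)$ with $M=\|\omega\|$ (this is exactly the alternative you sketch in your last sentences). Your primary argument instead is elementary and geometric: from $\tau(w)=-v-w$ and the isometry property you compute the Gram matrix $\|v\|^2\bigl(\begin{smallmatrix}1&-1/2\\-1/2&1\end{smallmatrix}\bigr)$ of $(v,\tau(v))$, and then the shortest-vector/fundamental-parallelogram estimate $\alpha^2-\alpha\beta+\beta^2<1$ on $[0,1)^2\setminus\{0\}$ shows $\mathbb{Z}v+\mathbb{Z}\tau(v)$ cannot be a proper sublattice of $L$; this replaces the structure theorem for modules over a PID by a minimality argument, and it also makes explicit the final isometry check (equality of Gram matrices, using $\langle v,\tau(v)\rangle=-\|v\|^2/2$ and $\mathrm{Re}\,\zeta_3=-1/2$) which the paper only asserts when it says the module isomorphism is ``even an isometry of lattices.'' Two very minor points: your independence remark ``$\tau(v)=\lambda v$ forces $\lambda^2+\lambda+1=0$'' implicitly extends $\tau$ to the rational or real span of $L$ (harmless, since $\tau$ is $\mathbb{Z}$-linear; independence also follows a posteriori from the Gram matrix, as the Cauchy--Schwarz inequality is strict there), and your Gram computation uses that $\tau$ preserves inner products, which follows from norm preservation and additivity via polarization — worth a word, but not a gap.
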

\begin{proof}
	The lattice $L$ can be seen as a $\mathbb{Z}[\tau]/(\tau^2 + \tau + 1)$-module.
	The ring $\mathbb{Z}[\tau]/(\tau^2 + \tau + 1)$ is isomorphic to $\mathbb{Z}[\zeta_3]$ which is a PID.
	It follows that $L$ is free of rank 1.
	Now pick a generator $\omega$ of $L$. Let $M = \|\omega\|$.  The homomorphism
	$$M \cdot \mathbb{Z}[\zeta_3] \longrightarrow L$$
	given by  $M(a + b\zeta_3) \longmapsto  a \omega + b\tau(\omega)$   for $a,b\in \mathbb{Z}$,
	is an isomorphism of $\mathbb{Z}[\zeta_3]$-modules and even an isometry of lattices. Thus, this proposition is proved. 	
\end{proof}

\subsection{The ring of integers}\label{cubic}
The structure of $O_F$ can be described as below.

\begin{proposition}\label{OF}
There exists some $f \in O_F$ such that $\Tr(f) = f + \sigma(f) + \sigma^2(f) =0$ and one of the following holds.
\begin{enumerate}
\item[i)] $O_F = \mathbb{Z} \oplus \mathbb{Z}[\sigma] \cdot f$  or 
\item[ii)] $O_F \supset \mathbb{Z} \oplus \mathbb{Z}[\sigma] \cdot f$  and $[O_F : (\mathbb{Z} \oplus \mathbb{Z}[\sigma] \cdot f)] =3$.
\end{enumerate}

\end{proposition}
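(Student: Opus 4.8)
The plan is to split off the trace-zero part of $O_F$ and exploit that $\sigma$ turns it into a module over a principal ideal domain. Set $O_F^0=\{x\in O_F:\Tr(x)=0\}$. This is a $\mathbb{Z}[\sigma]$-submodule of $O_F$, and on it $\sigma$ satisfies $1+\sigma+\sigma^2=0$ precisely because $\Tr(x)=x+\sigma(x)+\sigma^2(x)$. Since $\Tr\colon O_F\to\mathbb{Z}$ is a nonzero homomorphism, $O_F/O_F^0$ embeds into $\mathbb{Z}$, so it is free of rank one and $O_F^0$ has $\mathbb{Z}$-rank two. Now I would argue exactly as in the proof of Proposition~\ref{hexan}: via $\Phi$ the lattice $O_F^0$ carries an action of $\sigma$ as an isometry (Lemma~\ref{lengf}) with $\sigma^2+\sigma+1=0$, hence is a faithful, $\mathbb{Z}$-torsion-free module of rank one over $\mathbb{Z}[\sigma]/(\sigma^2+\sigma+1)\cong\mathbb{Z}[\zeta_3]$; as $\mathbb{Z}[\zeta_3]$ is a PID this module is free of rank one. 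Choosing a generator $f$ gives $\Tr(f)=0$ and $O_F^0=\mathbb{Z}[\sigma]\cdot f=\mathbb{Z} f\oplus\mathbb{Z}\sigma(f)$.

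Next I would compare $O_F$ with the full-rank sublattice $\mathbb{Z}\oplus\mathbb{Z}[\sigma]\cdot f=\mathbb{Z}\cdot 1\oplus O_F^0$, where the sum is direct because $\Tr(k\cdot 1)=3k\neq 0$ for $0\neq k\in\mathbb{Z}$. Its index in $O_F$ equals the index of $\mathbb{Z}\cdot\overline{1}$ in $O_F/O_F^0$, with $\overline{1}$ the class of $1$. Writing $\Tr(O_F)=d\mathbb{Z}$ with $d>0$, the trace induces an isomorphism $O_F/O_F^0\cong d\mathbb{Z}$ under which $\overline{1}$ is sent to $\Tr(1)=3$, so $\mathbb{Z}\cdot\overline{1}$ has index $3/d$ in $O_F/O_F^0$. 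Since $3=\Tr(1)\in d\mathbb{Z}$ forces $d\mid 3$, we get $d\in\{1,3\}$ and therefore $[\,O_F:\mathbb{Z}\oplus\mathbb{Z}[\sigma]\cdot f\,]=3/d\in\{1,3\}$. The value $d=3$ yields alternative~(i) and the value $d=1$ yields alternative~(ii), completing the proof.

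I do not foresee a genuine obstacle. The two steps needing care are (a) that $O_F^0$ is $\mathbb{Z}[\zeta_3]$-\emph{free} of rank one rather than merely torsion-free — this is exactly where the PID property of $\mathbb{Z}[\zeta_3]$ enters, the same input as in Proposition~\ref{hexan} — and (b) the trace bookkeeping that pins the index down to exactly $1$ or $3$ instead of an arbitrary integer. The remaining assertions (that $\Tr$ maps $O_F$ into $\mathbb{Z}$, that $O_F^0$ has $\mathbb{Z}$-rank two, and that $\mathbb{Z}[\sigma]/(\sigma^2+\sigma+1)\cong\mathbb{Z}[\zeta_3]$) are routine.
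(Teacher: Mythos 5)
Your proof is correct and follows essentially the same route as the paper: split $O_F$ along the trace kernel, use that $\mathbb{Z}[\sigma]/(\sigma^2+\sigma+1)\cong\mathbb{Z}[\zeta_3]$ is a PID to get a single generator $f$ of the trace-zero part, and pin the index of $\mathbb{Z}\oplus\mathbb{Z}[\sigma]\cdot f$ down to $1$ or $3$ from $\Tr(1)=3$. The only difference is that you spell out the freeness and index bookkeeping that the paper leaves as brief assertions.
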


\begin{proof}
Consider the group homomorphism $\Tr:  O_F \longrightarrow \mathbb{Z}$ that takes each $g \in O_F$ to its trace $\Tr(g)$.

Denote by $K = \ker(\Tr)$ and  $R=\mathbb{Z}[\sigma]/(\Tr)$.
One can see that $K$ is a free module of rank 1 over $R$. In other words, there exits some $f \in O_F$ such that $\Tr(f) = f + \sigma(f) + \sigma^2(f) =0$ and $K=\mathbb{Z}[\sigma] \cdot f$. 
In addition, since $\sigma$ is an isometry of $K$ and $\sigma^2 +\sigma +1 = \Tr = 0$ on $K$, Proposition \ref{hexan} says that $K$ is a hexagonal lattice.

The image $\Tr(O_F)$ contains $3 = \Tr(1)$. Therefore $\Tr$ is surjective or its image has index 3.
Moreover, $K$ is a rank 2 sublattice of $O_F$  that is orthogonal to $\mathbb{Z}$.
Thus, $O_F = \mathbb{Z} \oplus K = \mathbb{Z} \oplus \mathbb{Z}[\sigma] \cdot f$ if $\Tr$ is not surjective (case i)).
In case $\Tr$ is surjective, the lattice $ \mathbb{Z} \oplus K =\mathbb{Z} \oplus \mathbb{Z}[\sigma] \cdot f$  has index 3 in $O_F$ (case ii)). 

\end{proof}

\begin{proposition}\label{minlength}
We have  $\|g\|^2 \ge \frac{2p}{3}$ for all $g \in O_F \backslash \mathbb{Z}$. 
\end{proposition}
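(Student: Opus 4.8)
The plan is to reduce the claimed bound to a statement about the shortest nonzero vector of a suitable hexagonal lattice. Let $V=\{x\in\mathbb{R}^3 : x_0+x_1+x_2=0\}$ be the trace-zero hyperplane, so that $V^\perp=\mathbb{R}\cdot(1,1,1)$, and let $\pi\colon\mathbb{R}^3\to V$ be the orthogonal projection. The starting observation is that $\pi(\Phi(g))=0$ exactly when $\Phi(g)$ lies on the line $\mathbb{R}\cdot(1,1,1)$, i.e. when $g=\sigma(g)=\sigma^2(g)$, i.e. when $g\in\mathbb{Q}\cap O_F=\mathbb{Z}$. Hence for every $g\in O_F\setminus\mathbb{Z}$ the vector $\pi(\Phi(g))$ is a nonzero element of the rank-two lattice $\Lambda:=\pi(\Phi(O_F))\subset V$, and since $\pi$ is an orthogonal projection,
\[
\|g\|^2=\|\Phi(g)\|^2\ \ge\ \|\pi(\Phi(g))\|^2\ \ge\ \lambda_1(\Lambda)^2,
\]
where $\lambda_1(\Lambda)$ denotes the minimal length of a nonzero vector of $\Lambda$. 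So it suffices to prove $\lambda_1(\Lambda)^2=\frac{2p}{3}$.

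Next I would identify $\Lambda$. As $F$ is totally real, $\Phi$ is the canonical (Minkowski) embedding, so $\operatorname{covol}(\Phi(O_F))=\sqrt{|\Delta|}=\sqrt{p^2}=p$. By the first paragraph $\Phi(O_F)\cap V^\perp=\mathbb{Z}\cdot(1,1,1)$, a rank-one lattice of covolume $\|(1,1,1)\|=\sqrt3$; multiplicativity of covolumes along the orthogonal decomposition $\mathbb{R}^3=V^\perp\oplus V$ then gives $\operatorname{covol}(\Lambda)=p/\sqrt3$. (The same value can be obtained directly from Proposition \ref{OF}: in case (i) one has $\Lambda=\Phi(\mathbb{Z}[\sigma]f)$, while in case (ii) $\Lambda$ contains $\Phi(\mathbb{Z}[\sigma]f)$ with index $3$, which is compensated exactly by the factor $3$ in $[O_F:\mathbb{Z}\oplus\mathbb{Z}[\sigma]f]$.) Finally, $\sigma$ permutes the coordinates of $\mathbb{R}^3$, fixes $(1,1,1)$, and preserves $\Phi(O_F)$, hence induces an isometry $\tau$ of $\Lambda$; a direct computation on $(a,b,c)$ with $a+b+c=0$ shows $\tau^2+\tau+1=0$ on $V$. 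Proposition \ref{hexan} now applies, so $\Lambda$ is hexagonal, say $\Lambda\cong M\cdot\mathbb{Z}[\zeta_3]$.

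For a hexagonal lattice $M\cdot\mathbb{Z}[\zeta_3]$ one has $\operatorname{covol}=\frac{\sqrt3}{2}M^2$ and $\lambda_1=M$; equating $\frac{\sqrt3}{2}M^2=p/\sqrt3$ yields $\lambda_1(\Lambda)^2=M^2=\frac{2p}{3}$, which together with the first paragraph proves the proposition. I expect the only real difficulty here to be bookkeeping rather than mathematics: fixing the normalization of $\Phi$ so that $\operatorname{covol}(\Phi(O_F))=p$, and, in case (ii) of Proposition \ref{OF}, checking that passing to the index-$3$ overlattice $O_F$ does not alter $\operatorname{covol}(\Lambda)$. Everything after that is a one-line computation with the hexagonal lattice, and in fact one sees that equality $\|g\|^2=\frac{2p}{3}$ is attained.
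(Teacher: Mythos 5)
Your argument is correct, but it takes a genuinely different route from the paper's. The paper works inside $O_F$ itself: using Proposition \ref{OF} it splits into the two cases $O_F=\mathbb{Z}\oplus\mathbb{Z}[\sigma]\cdot f$ and $[O_F:\mathbb{Z}\oplus\mathbb{Z}[\sigma]\cdot f]=3$; in the first case the identity $\co(O_F)=p$ gives $\|f\|^2=2p/3$ directly, while in the second it gets $\|f\|^2=2p$ and then pins down an explicit coset representative $\mathsf{x}=[a+bf+c\sigma(f)]/3$, using integrality of $\|\mathsf{x}\|^2=\Tr(\mathsf{x}^2)$ together with the shape of the conductor (Remark \ref{conductor}) to force $b=-c=\pm1$, hence a minimum of $(1+2p)/3\ge 2p/3$. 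You instead project $\Phi(O_F)$ orthogonally onto the trace-zero plane, compute $\co\bigl(\pi_V(\Phi(O_F))\bigr)=p/\sqrt{3}$ from $\co(\Phi(O_F))=\sqrt{\Delta}=p$ and $\Phi(O_F)\cap V^{\perp}=\mathbb{Z}\cdot(1,1,1)$, recognize the projected lattice as hexagonal via Proposition \ref{hexan} (applied to the projection, whereas the paper applies it to the trace-zero sublattice $K\subset O_F$), and read off $\lambda_1^2=2p/3$; the bound follows since orthogonal projection does not increase lengths and kills exactly $\Phi(\mathbb{Z})$. This is cleaner: it bypasses the case distinction of Proposition \ref{OF} and the congruence argument with the conductor entirely (you invoke Proposition \ref{OF} only as an optional cross-check of the covolume), at the price of yielding only the stated inequality rather than the exact minimum of $O_F\setminus\mathbb{Z}$.

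One small correction: your closing remark that equality $\|g\|^2=2p/3$ is attained is false in case (ii) of Proposition \ref{OF}. There the shortest vectors of the projected lattice are projections of elements whose trace is $\pm1\Mod{3}$, and these do not lift to trace-zero elements of $O_F$; the true minimum over $O_F\setminus\mathbb{Z}$ is $(1+2p)/3>2p/3$ (for $p=7$ it equals $5$, as for $\theta$ in Proposition \ref{sum3}, while $2p/3=14/3$). Equality is attained only in case (i). This does not affect the validity of your proof of the proposition, which asserts only the inequality.
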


\begin{proof}
With the notations of Proposition \ref{OF}, we set $L_1 = \mathbb{Z} \oplus K= \mathbb{Z} \oplus \mathbb{Z}[\sigma] \cdot f$. 
Since $K$ is orthogonal to $\mathbb{Z}$, $f$ is a shortest vector in $L_1\backslash \mathbb{Z}$.
The fact that $K$ is a hexagonal lattice leads to the following.
\begin{equation} \label{eq21}
 \co(L_1 )=\|1\| \|f\|\|\sigma(f)\| (\sqrt{3}/2)=3 \|f\|^2/2 .  
  \end{equation}
  
 There are two cases.
\begin{itemize}
\item[i)] If $O_F = L_1$ then 
\begin{equation} \label{eq22}
\co(L_1)= \co(O_F) = p.
\end{equation} 
It follows from \eqref{eq21} and \eqref{eq22} that $\|f\|^2= 2p/3$.
The result is then implied since $f$ is a shortest vector of $O_F\backslash\mathbb{Z}$.
\item[ii)] If $O_F \supset L_1$  and $[O_F : L_1] =3$ then 
\begin{equation} \label{eq23}
\co(L_1)= 3\co(O_F) = 3p.
\end{equation}
It follows from \eqref{eq21} and \eqref{eq23} that $\|f\|^2= 2p$.
\end{itemize}
Observe that $O_F = L_1 + \mathsf{x}$, where $\mathsf{x}$ is an element of the form
$\mathsf{x} = [a + bf + c\sigma(f)]/3$  for certain $a,b,c\in \{-1,0,1\}$ that has trace not
divisible by 3. Since  $\Tr(\mathsf{x})=a$, by replacing $\mathsf{x}$ by $\pm \mathsf{x} + l$ for some integer $l$ if necessary, one may assume that $a = 1$. Therefore $\mathsf{x} = [1 + bf + c\sigma(f)]/3$.

Since $K=\mathbb{Z} f \oplus \mathbb{Z} \sigma(f)$ is a hexagonal lattice, we have 
$$\|bf +c\sigma(f)\|^2=(b^2-bc+c^2)\|f\|^2= 2(b^2-bc+c^2)p.$$
By Pythagoras theorem,
$$\|\mathsf{x}\|^2= (\|1\|^2 + \|bf +c\sigma(f)\|^2 )/9=[3 + 2(b^2-bc+c^2)p]/9.$$
Since $\|\mathsf{x}\|^2$ is an integer, 9 must divide $3 + 2(b^2-bc+c^2)p$. 
Now, if one of $b,c$ is 0 or if $b=c$, then the expression  $b^2-bc+c^2$ is 1 and hence
$3 + (b^2-bc+c^2)\|f\|^2 = 3 + 2p$  is divisible by 9. This is impossible by Remark \ref{conductor}. 
Hence $b=-c = \pm 1$. As the result,  
 $\mathsf{x}= [1 \pm (f -\sigma(f))]/3$.
Accordingly, $\|\mathsf{x}\|^2 = (3 + 3\|f\|^2)/9= (1 + 2p)/3$. It is easy to see that this is the length squared of the shortest vectors of $O_F\backslash\mathbb{Z}$, which completes the proof.
 \end{proof}

\subsection{The unit lattice}\label{logunit}The map $\log: F^\times \longrightarrow  \mathbb{R}^3$ is defined as below.
$$\log(f):= (\log|\sigma^i(f)|)_{0 \le i \le 2} \in \mathbb{R}^3 \text{ for all } f \in F^\times.$$
We set
$$\mathcal{H} = \{ (v_0, v_1, v_2) \in \mathbb{R}^3: v_0 + v_1 + v_2 = 0 \},$$
 a plane in $\mathbb{R}^3$, and
$$\Lambda= \log(O_F^\times)= \{(\log|\sigma^i(\varepsilon)|)_{i=0}^2: \varepsilon \in O_F^\times\}.$$
Note that $\Lambda$ is a full rank lattice contained in  $\mathcal{H}$ by the Dirichlet's unit theorem. Let $\lambda_1$ be the length of the shortest vectors of $\Lambda$.

\begin{remark}\label{lambdahexan}
	Since $\sigma$ an isometry of $\Lambda$ and $\sigma^2 +\sigma +1 = \Tr = 0$ on $\Lambda$, 
	one obtains that $\Lambda $ is a hexagonal lattice by applying Proposition \ref{hexan}.
 \end{remark}
By Remark \ref{lambdahexan}, one can assume that $\Lambda $ has a $\mathbb{Z}$-basis containing two shortest vectors $b_1 = \log \varepsilon_1, b_2=\log \varepsilon_2$ for some $\varepsilon_1, \varepsilon_2 \in O_F^\times$ and with $\|b_1\| = \|b_2\|=\|b_2-b_1\|$ (Figure \ref{pic:hexagonal}). Denote by 
$$\mathcal{F} = \left\{\alpha_1 \cdot b_1 + \alpha_2 \cdot b_2: \alpha_1, \alpha_2 \in \left(-\frac{1}{2}, \frac{1}{2}\right] \right\},  \text{ and }$$
$$B(w)= \{ \mathbf{x} \in O_F^{\times}: \|\log \mathbf{x} -w\| < \lambda_1\} \text{ for each } w \in \mathcal{F}.$$

The set $B(w)$ can be described by the following lemma.
\begin{lemma}\label{Bomega}
Let $w \in \mathcal{F}$. Then $\# B(w) \le 8$. Moreover, 
$$B(w) \subset \{\pm 1, \pm \mathbf{x}_1, \pm\mathbf{x}_2, \pm \mathbf{x}_3\} \subset O_F^{\times} \text { where }$$ 
$$\|\log \mathbf{x}_1-w\| \ge  3\lambda_1/16 , \|\log\mathbf{x}_2-w\| \ge  \lambda_1/2  \text{ and } \|\log \mathbf{x}_3-w\| \ge  \sqrt{3}\lambda_1/2.$$
\end{lemma}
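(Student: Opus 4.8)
The plan is to analyze the geometry of the hexagonal lattice $\Lambda$ inside the plane $\mathcal{H}$, together with the fact that the map $O_F^\times \to \Lambda$ given by $\mathbf{x} \mapsto \log \mathbf{x}$ has kernel exactly $\{\pm 1\}$ (the only roots of unity in a cyclic cubic field, which is totally real). So elements of $B(w)$ come in pairs $\pm \mathbf{x}$, and counting them amounts to counting lattice points of $\Lambda$ in the open disk $\|v - w\| < \lambda_1$, then doubling. Thus it suffices to show that an open disk of radius $\lambda_1$ centered anywhere in the fundamental domain $\mathcal{F}$ contains at most four points of $\Lambda$, and that their distances to the center obey the claimed lower bounds $3\lambda_1/16$, $\lambda_1/2$, $\sqrt{3}\lambda_1/2$.

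First I would fix coordinates in the plane $\mathcal{H}$ so that $\Lambda$ becomes the standard hexagonal (triangular) lattice with minimal vector length $\lambda_1$, generated by $b_1, b_2$ with $\|b_1\| = \|b_2\| = \|b_2 - b_1\| = \lambda_1$ and angle $60^\circ$ between $b_1$ and $b_2$. The fundamental domain $\mathcal{F}$ is the half-open parallelogram spanned by $b_1, b_2$. By symmetry of the hexagonal lattice (the point group of order $12$, plus translations) I can reduce $w$ to a small region — for instance the triangle with vertices at a lattice point, an edge midpoint, and a deep hole (the centroid of a fundamental triangle), which is a fundamental domain for the full affine symmetry group of $\Lambda$. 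On this small region I would enumerate the nearest lattice points: the closest lattice point is at distance $d_1(w)$, the next at $d_2(w)$, etc. The worst case for having many points in the disk is $w$ near a deep hole, where the three nearest lattice points all sit at distance $\lambda_1/\sqrt{3} < \lambda_1$; a fourth lattice point is then at distance $2\lambda_1/\sqrt{3} > \lambda_1$, so at most three lattice points (hence, with signs, at most $\#B(w) \le 2\cdot 3 = 6$)...

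Here I must be more careful: the claimed bound is $\#B(w) \le 8$, i.e. up to four lattice points, so I do not even need optimality — I need $\le 4$, which follows from a packing argument (four points of $\Lambda$ in a disk of radius $\lambda_1$ would force two of them within distance $\lambda_1$ of each other only if... actually a disk of radius $\lambda_1$ has area $\pi\lambda_1^2$, and each lattice point "occupies" a Voronoi cell of area $\co(\Lambda) = \tfrac{\sqrt3}{2}\lambda_1^2$, giving a crude count of at most $\lceil 2\pi/\sqrt3 \rceil + (\text{boundary correction}) $, but the clean way is direct enumeration). So the core of the argument is: (a) pick the lattice point $\mathbf{x}_1$ closest to $w$; the maximum over $w \in \mathcal{F}$ of $\min_{v\in\Lambda}\|v-w\|$ is the covering radius $\lambda_1/\sqrt3$, but one also needs the \emph{minimum} behavior — actually the lower bound $\|\log\mathbf{x}_1 - w\|\ge 3\lambda_1/16$ is vacuous for the closest point unless $w$ is forced away from lattice points, so this bound must really be an artifact of how $w$ ranges (presumably $w$ runs over a specific set in the application, or the $\mathbf{x}_i$ are a specific non-nearest enumeration); I would match the paper's indexing by taking $\mathbf{x}_1, \mathbf{x}_2, \mathbf{x}_3$ to be the three lattice points nearest a deep hole and computing distances there, then checking the three stated inequalities hold for all $w \in \mathcal{F}$ by the symmetry reduction above. (b) For the count, show the open disk $\|v-w\|<\lambda_1$ meets $\Lambda$ in $\le 4$ points by checking it on the reduced region for $w$.

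The main obstacle I anticipate is the bookkeeping: pinning down exactly which three non-trivial units (up to sign) can lie in $B(w)$ and verifying the three distance inequalities $\ge 3\lambda_1/16$, $\ge \lambda_1/2$, $\ge \sqrt3\lambda_1/2$ uniformly over the fundamental domain $\mathcal{F}$. This requires (i) correctly identifying the fundamental domain for the symmetry group of the hexagonal lattice and reducing $w$ to it, and (ii) a short but fiddly coordinate computation — writing $w = \alpha_1 b_1 + \alpha_2 b_2$ and minimizing/maximizing $\|v - w\|$ over the finitely many candidate nearby $v \in \Lambda$ as $(\alpha_1,\alpha_2)$ ranges over the reduced triangle. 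The bound $\#B(w)\le 8$ itself is comparatively soft and follows from a volume/packing estimate or directly from the enumeration. I would present the proof by: establishing the $\pm$-pairing and reducing to lattice-point counting in $\Lambda$; normalizing coordinates; reducing $w$ by symmetry; enumerating the at most four nearest lattice points and their distances; and reading off both the cardinality bound and the three inequalities.
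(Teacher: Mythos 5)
Your overall strategy---pair the units $\pm\mathbf{x}$ over each lattice point and reduce everything to plane geometry of the hexagonal lattice $\Lambda$: count points of $\Lambda$ in the open disk of radius $\lambda_1$ about $w$ and bound their distances---is exactly the paper's approach (its proof is essentially a reading of Figure 1). But the two steps at the heart of your sketch are wrong or not carried out. First, the count: the worst case is \emph{not} the deep hole. For $w=b_1/2\in\mathcal{F}$ the four lattice points $0,\,b_1,\,b_2,\,b_1-b_2$ all lie strictly within $\lambda_1$ of $w$ (the last two at distance $\sqrt{3}\lambda_1/2$), so the maximum is four lattice points, the bound $\#B(w)\le 8$ is attained, and your claim ``at most three lattice points, hence $\#B(w)\le 6$'' is false. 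Your fallback, the area/covolume estimate, gives roughly $2\pi/\sqrt{3}\approx 3.6$ \emph{before} boundary corrections and so does not deliver $\le 4$ either. A clean argument: translate so that the nearest lattice point to $w$ is $0$; then every candidate is $0$ or one of the six minimal vectors (the second shell of $\Lambda$ is at distance $\sqrt{3}\lambda_1$ from $0$, too far), and by the parallelogram law at most one of each antipodal pair $\pm v$ can satisfy $\|w\mp v\|<\lambda_1$, since otherwise $2\|w\|^2+2\lambda_1^2<2\lambda_1^2$; hence at most $1+3=4$ lattice points.

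Second, the distance bounds. The $\mathbf{x}_i$ are not fixed units (``the three lattice points nearest a deep hole''); they are the nontrivial units occurring in $B(w)$, which depend on $w$ and are ordered by $\|\log\mathbf{x}_i-w\|$. The first bound is not vacuous precisely because $\mathbf{x}_1\neq\pm 1$, so $\log\mathbf{x}_1$ is a \emph{nonzero} point of $\Lambda$, and the hypothesis $w\in\mathcal{F}$ (the half-open parallelogram centered at $0$) keeps $w$ at distance at least $\sqrt{3}\lambda_1/4\ \ge\ 3\lambda_1/16$ from every nonzero lattice point (the minimum $\sqrt{3}\lambda_1/4$ occurs e.g.\ at $w=b_1/2+b_2/4$ against $b_1$; this stronger value is what the paper actually uses later). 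This is the point your sketch leaves unresolved, and it is also why reducing $w$ by the full affine symmetry group of $\Lambda$ is risky: translations erase the distinction between the origin (which corresponds to $\pm1$ and is exempt from the bounds) and the nonzero lattice points, so after that reduction the inequality for $\mathbf{x}_1$ cannot even be stated. The second bound is immediate, since two distinct lattice points cannot both lie within $\lambda_1/2$ of $w$; the third requires the further check that at most two nonzero lattice points of $\Lambda$ lie within $\sqrt{3}\lambda_1/2$ of any $w\in\mathcal{F}$ (restrict candidates to $\pm b_1,\pm b_2,\pm(b_1-b_2)$ using $w\in\mathcal{F}$, apply the antipodal-pair argument, and rule out one point from each of the three pairs by a short computation). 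With these repairs your plan does reproduce the paper's proof, but as written the enumeration it relies on has not been done correctly.
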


\begin{figure}[h]
	\centering
	\includegraphics[width=0.6\linewidth]{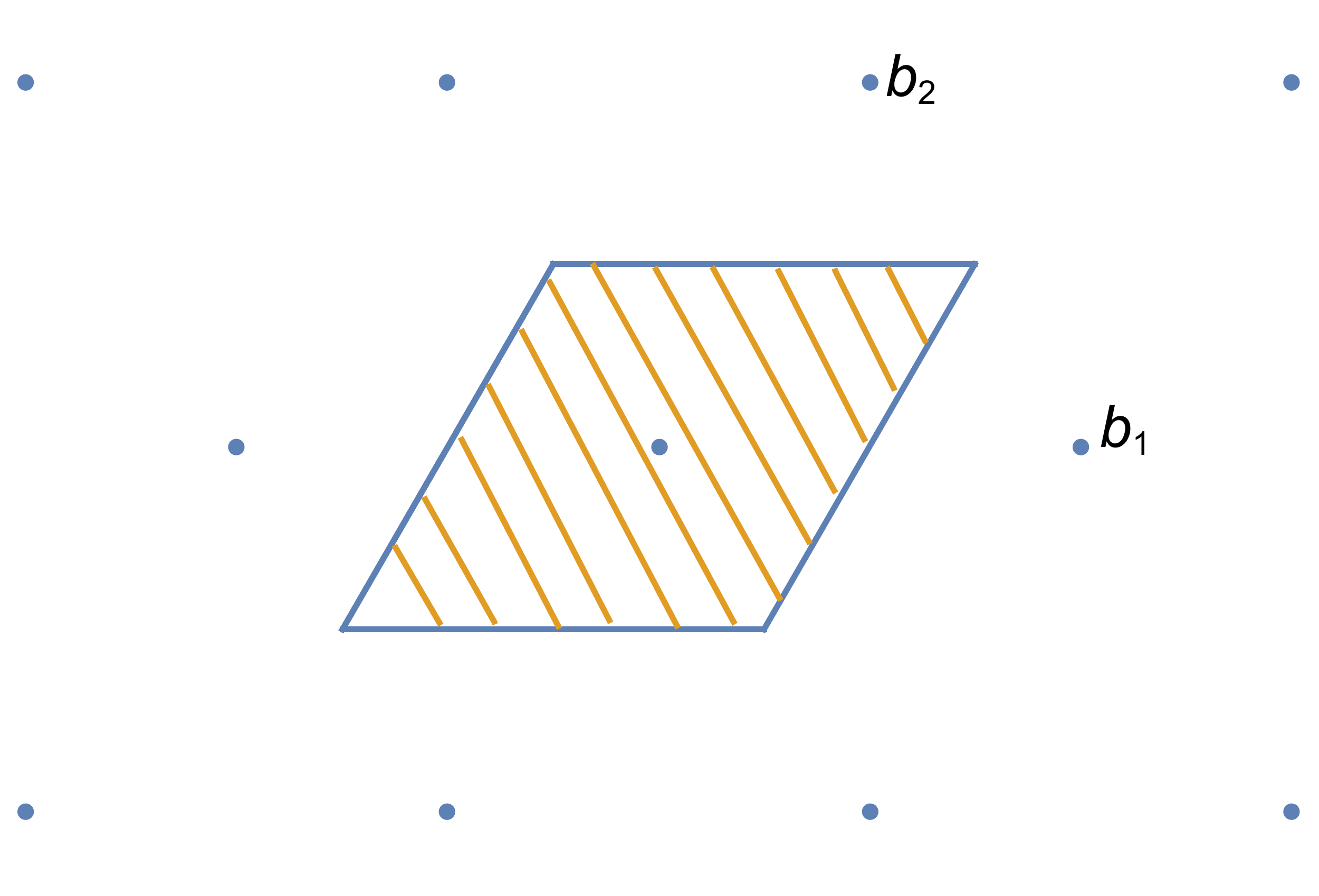}
	\caption{The lattice $\Lambda$ and $\mathcal{F}$ (the shaded area). \label{pic:hexagonal}}
\end{figure}   
 
\begin{proof}
Since $\Lambda$ is a hexagonal lattice (see Remark \ref{lambdahexan}), it has at most four points from which the distance to $w$ strictly less than $\lambda_1$ (see Figure \ref{pic:hexagonal}). Each point $v \in \Lambda$ has the form $\log(\mathbf{x})$ or $\log(-\mathbf{x})$ for some $\mathbf{x} \in O_F^{\times}$. Therefore, there are at most 8 points of $O_F^{\times}$ in $B(w)$. 

Assume that $B(w) \subset \{\pm 1, \pm \mathbf{x}_1, \pm\mathbf{x}_2, \pm \mathbf{x}_3\} \subset O_F^{\times}$ with 
$$\|\log \mathbf{x}_1-w\| \le \|\log\mathbf{x}_2-w\| \le  \|\log \mathbf{x}_3-w\|.$$
The lower bound for each $\|\log \mathbf{x}_i-w\|$ is easily observed since $\Lambda$ is  hexagonal (see Figure \ref{pic:hexagonal}).
\end{proof}

\begin{lemma}\label{lambda1}
If $p=7$ then  $\lambda_1 \ge 1.025134$. Moreover,  $\lambda_1 \ge 1.296382$ when $p \ge 9$.
\end{lemma}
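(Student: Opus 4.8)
The plan is to get an explicit lower bound on $\lambda_1$, the length of a shortest vector of the unit lattice $\Lambda = \log(O_F^\times)$, by combining a lower bound on the covolume of $\Lambda$ (which is governed by the regulator, hence by the discriminant via Dirichlet's class number formula or a direct regulator bound) with the hexagonal shape of $\Lambda$ established in Remark \ref{lambdahexan}. For a hexagonal lattice one has the exact relation $\co(\Lambda) = (\sqrt 3/2)\,\lambda_1^2$, so a lower bound on $\co(\Lambda)$ immediately yields $\lambda_1 \ge \sqrt{2\,\co(\Lambda)/\sqrt 3}$. Thus the whole problem reduces to bounding the regulator $R_F$ of a cyclic cubic field from below in terms of its conductor $p$.

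**First** I would recall that for a totally real cubic field the covolume of $\Lambda$ inside the trace-zero plane $\mathcal H$ equals $\sqrt{3}\,R_F$ (the $\sqrt 3$ coming from the change of normalization between the standard regulator, which uses two of the three logarithmic coordinates, and the symmetric embedding into $\mathcal H$ we have fixed). So $\lambda_1^2 = (2/\sqrt3)\co(\Lambda) = 2 R_F$, i.e. $\lambda_1 = \sqrt{2 R_F}$. It then suffices to show $R_F \ge 0.5254\ldots$ when $p = 7$ and $R_F \ge 0.8403\ldots$ when $p \ge 9$. For $p = 7$ the field is $\mathbb Q(\zeta_7)^+$, whose regulator is a single explicit real number that one simply computes (or cites), giving the stated $\lambda_1 \ge 1.025134$. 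For $p \ge 9$ I would invoke a known effective lower bound for regulators of totally real cubic fields; the cleanest is the bound coming from the work of Cusick or of Pohst–Zassenhaus on cubic units, or more simply the observation that a shortest unit $\varepsilon_1$ has $\|\log\varepsilon_1\|^2 = \lambda_1^2$ and, since $\varepsilon_1$ is a non-rational algebraic integer of degree $3$, one has a lower bound on $\max_i|\sigma^i(\varepsilon_1)|$ (hence on $\lambda_1$) via Mahler measure / the fact that the house of a unit of degree $3$ that is not a root of unity is bounded below — here one can be more precise using that $\Delta = p^2 \ge 81$.

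**The main obstacle**, and the step requiring genuine care, is making the regulator lower bound depend on $p$ in a way that is strong enough to clear the numerical thresholds $1.296382$ at $p = 9$ while not degrading as $p \to \infty$. A purely "field-independent" lower bound on $R_F$ for totally real cubics (such as $R_F > 0.2$-type bounds) is far too weak. The right approach is: a shortest unit $\varepsilon_1 \notin \{\pm1\}$ generates $F$ (since $F$ has no proper subfield), so $1, \varepsilon_1, \varepsilon_1^2$ — or better, using Proposition \ref{minlength} applied to a suitable element of $\mathbb Z[\varepsilon_1] \subseteq O_F$ — must have $\|\cdot\|^2 \ge 2p/3$ for the non-rational ones; translating an $L^2$-bound in the multiplicative embedding into an $L^2$-bound in the logarithmic embedding (via the inequality between $\sum|\sigma^i(\varepsilon_1)|^2$ and $\exp$ of logarithmic lengths, using that $\prod|\sigma^i(\varepsilon_1)| = 1$) yields $\lambda_1 \gtrsim \log(p)/c$ for an explicit constant. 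One then checks by hand that this crossed bound already exceeds $1.296382$ for all $p \ge 13$, and treats $p = 9$ (the field $\mathbb Q(\zeta_9)^+$) and $p = 7$ as two explicit finite cases by direct computation of the fundamental units. I expect the interplay between the $L^2$ bound from Proposition \ref{minlength} and the conversion to logarithmic length — getting the constants sharp enough — to be the part of the argument that needs the most bookkeeping; everything else is either the exact hexagonal identity or a finite check.
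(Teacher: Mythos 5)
Your final plan (after you discard the regulator/covolume route, which you rightly flag as too weak on its own) is essentially the paper's proof: for $p=7$ and $p=9$ the fields are simplest cubic fields and the fundamental units are computed explicitly, while for $p\ge 13$ one applies Proposition \ref{minlength} to a shortest unit $\varepsilon_1\in O_F^\times\setminus\{\pm1\}$ and converts the bound on $\sum_i|\sigma^i(\varepsilon_1)|^2$, together with $\prod_i|\sigma^i(\varepsilon_1)|=1$, into a lower bound on $\|\log\varepsilon_1\|$. One concrete numerical point where your sketch as written falls short: at $p=13$ the raw bound $\|\varepsilon_1\|^2\ge 2p/3=26/3\approx 8.67$ is not enough, since the optimal conversion (minimizing $\|\log \mathbf{x}\|$ subject to $\sum_i x_i^2=26/3$ and $\prod_i|x_i|=1$, attained at $|x_1|=|x_2|=|x_0|^{-1/2}$) yields only about $1.270<1.296382$; you need the additional observation that $\|\varepsilon_1\|^2=\Tr(\varepsilon_1^2)$ is a rational integer, hence $\ge 9$, which is precisely the constant the paper uses to reach $1.296382$. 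With that one-line fix your bookkeeping closes and the argument coincides with the paper's.
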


\begin{proof}
If $p=7$ or $p=9$ then $F$ is a simplest cubic field for which a pair of fundamental units $\{\epsilon_1, \epsilon_2\} $ can be computed easily \cite{shanks1974simplest}. The vectors $\log \epsilon_1$ and $\log \epsilon_2$ form a $\mathbb{Z}$-basis for the lattice $\Lambda = \log O_F^{\times}$. Using this basis, one can easily find a shortest vector of $\Lambda$ and its length. Here one obtains that $\lambda_1 \ge 1.025134$ when $p=7$ and 
 $\lambda_1 \ge 1.303291$ when $p=9$.

We now consider the case in which $p \ge 13$. Let $v  \in \Lambda\backslash \{0\}$. Then $v = \log\mathbf{x}$ for some $\mathbf{x} \in O_F^{\times}\backslash\{\pm 1\}$. Proposition \ref{minlength} says that $\|\mathbf{x}\|^2 \ge 9$ since $p \ge 13$. Hence $\|v\|=\|\log \mathbf{x}\| \ge 1.296382$. This equality holds for all nonzero vectors of $\Lambda$, therefore $\lambda_1 \ge 1.296382$.
\end{proof}

\subsection{Arakelov divisors}
Let  $u = (u_0, u_1, u_2) \in \mathbb{R}^3$. The \textit{norm} of $u$ is defined by $N(u):= u_0 u_1 u_2$. 

 \begin{definition}
 	An \textit{Arakelov divisor} of $F$ is a pair $D=(I,u)$ where $I$ is a fractional ideal of $F$ and $u$ is an arbitrary element in $(\mathbb{R}^{\times}_{+})^3 \subset \mathbb{R}^3$. 
 \end{definition}
The Arakelov divisors of $F$ form an additive group denoted by $\Div_F$. 
The \textit{degree} of $D = (I,u)$ is defined by $\deg(D): = \log{(N(u) N(I))}$. 
 Denote by $ u f:= u \cdot \Phi(f) = (u_i \cdot \sigma^i(f)) \in \mathbb{R}^3$ for all $f \in I$. In terms of coordinates, one has 
  $$\|u f\|^2 =\|u \cdot \Phi(f)\|^2 = \sum_{i=0 }^2  u_i^2 \cdot |\sigma^i(f)|^2.$$
Let $ u I: =\{u f: f \in I\}\subset \mathbb{R}^3$. Then it is a lattice with the  metric inherited from $\mathbb{R}^3$. We call $u I$ the \textit{lattice associated} to $D$.

To each element $f \in F^\times$ is attached a \textit{principal} Arakelov divisor $(f):=(f^{-1} O_F, |f|)$ where 
$f^{-1} O_F= \{f^{-1} g: g \in O_F\}$ is the principal (fractional) ideal generated by $f^{-1}$ and $|f|: =|\Phi(f)|= (|\sigma^i(f)|)_{0 \le i \le 2} \in \mathbb{R}^3$. It has degree $0$ by the product formula. See \cite{ref:4, ref:3} for full details about Arakelov divisors.

\subsection{The Arakelov class group}\label{Pic}
All Arakelov divisors of degree 0 form a group, denoted by $\Div^0_F$. 
Similar to the Picard group of an algebraic curve, we have the following definition.
\begin{definition}
The \textit{Arakelov class group }  $\Pic^0_F$ is the quotient of $\Div^0_F$ by its subgroup of principal divisors.
\end{definition}
 
 We define
$$\To = \mathcal{H} / \Lambda.$$
Thus $\To$ is a real torus of dimension $2$. Each class $v = (v_0, v_1, v_2) \in  \To$ can be embedded into $\Pic^0_F$ as the class of the divisor $D_v= (O_F, u)$ with $u = (e^{-v_{i}})_{i}$. Therefore, $\To$ can be viewed as a subgroup of $\Pic^0_F$. 

 Denoting by $Cl_F$ the class group of $F$, the structure of $\Pic^0_F$ can be seen by the following proposition.

\begin{proposition}\label{prop:structure1}
The map that sends the Arakelov class represented by a divisor $D = (I, u)$ to the ideal class of $I$ is a homomorphism from $ \Pic^0_F$ to the class group $Cl_F$ of $F$. 
It induces the exact sequence 
\begin{align*}
 0 \longrightarrow \To \longrightarrow \Pic^0_F \longrightarrow Cl_F \longrightarrow 0  .
\end{align*}
\end{proposition}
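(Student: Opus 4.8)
The plan is to verify directly that the assignment $D=(I,u)\mapsto [I]$ is a well-defined, surjective group homomorphism $\Pic^0_F\to Cl_F$, and then to identify its kernel with the image of $\To$; the exactness of the displayed sequence is exactly the conjunction of these facts together with the injectivity of $\To\to\Pic^0_F$.

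\textbf{Homomorphism and surjectivity.} First I would note that on $\Div^0_F$ the rule $(I,u)\mapsto[I]$ is a homomorphism, since the group law on Arakelov divisors is $(I,u)+(J,v)=(IJ,uv)$ and $[IJ]=[I][J]$ in $Cl_F$. To see it factors through $\Pic^0_F$ it suffices that a principal Arakelov divisor $(f)=(f^{-1}O_F,|f|)$ has principal underlying ideal $f^{-1}O_F$, hence maps to the trivial class, so the subgroup of principal divisors lies in the kernel. Surjectivity follows by scaling: given an ideal class, choose a fractional ideal $I$ in it and pick $u\in(\mathbb{R}^{\times}_{+})^3$ with $N(u)=N(I)^{-1}$, so that $\deg(I,u)=\log(N(u)N(I))=0$ and $(I,u)\in\Div^0_F$ represents a class mapping to $[I]$.

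\textbf{Exactness in the middle.} Next I would compute the kernel. If a class in $\Pic^0_F$ maps to $[O_F]$, its ideal is principal, say $I=f^{-1}O_F$ with $f\in F^\times$; subtracting the principal divisor $(f)$ replaces the representative by one of the form $(O_F,u')$, still of degree $0$, i.e. with $N(u')=1$. Setting $v=(-\log u'_i)_i$, the condition $N(u')=1$ is precisely $v_0+v_1+v_2=0$, so $v\in\mathcal{H}$ and $(O_F,u')=D_v$ lies in the image of $\To$. The reverse inclusion is immediate, since each $D_v$ has underlying ideal $O_F$. Hence the image of $\To$ equals the kernel.

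\textbf{Injectivity of $\To\to\Pic^0_F$.} Finally I would show this map is injective. If $D_v=(O_F,(e^{-v_i})_i)$ is a principal divisor $(f)$, then $f^{-1}O_F=O_F$ forces $f\in O_F^{\times}$, and comparing the archimedean components gives $|\sigma^i(f)|=e^{-v_i}$ for all $i$, i.e. $\log f=-v\in\Lambda$, so $v=0$ in $\To=\mathcal{H}/\Lambda$. I do not anticipate any genuine obstacle here; the only point requiring care is keeping straight the sign conventions relating $u$, the map $\log$, the degree, and membership in $\mathcal{H}$, which is what makes ``degree $0$'' match up with ``lies in $\mathcal{H}$''.
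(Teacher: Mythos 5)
Your proposal is correct, but it takes a different route from the paper only in the sense that the paper gives no argument at all: its ``proof'' is a citation to Proposition 2.2 of Schoof's \emph{Computing Arakelov class groups} \cite{ref:4}. What you have written is essentially the standard verification that that reference contains, done directly: the map is a homomorphism on $\Div^0_F$ because the group law is componentwise, it kills principal divisors since $(f)$ has underlying ideal $f^{-1}O_F$, surjectivity follows by rescaling $u$ to force degree $0$, the kernel computation correctly uses subtraction of $(f)$ to normalize a representative to $(O_F,u')$ with $N(u')=1$, and your sign bookkeeping ($v=(-\log u_i')_i\in\mathcal{H}$, matching the paper's convention $D_v=(O_F,(e^{-v_i})_i)$) is consistent. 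The one small point you gloss over is that $v\mapsto D_v$ is well defined on $\To=\mathcal{H}/\Lambda$ and is a homomorphism into $\Pic^0_F$, i.e.\ that shifting $v$ by $\log\varepsilon$ with $\varepsilon\in O_F^{\times}$ changes $D_v$ by the principal divisor $(\varepsilon)=(O_F,|\varepsilon|)$; this is the mirror image of your injectivity computation and takes one line, and in fairness the paper already asserts this embedding before stating the proposition, so treating it as part of the setup is defensible. The practical upshot of your approach is a self-contained proof in place of an external reference; the cost is only that you must keep the sign and degree conventions straight, which you did.
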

\begin{proof}
See Proposition 2.2 in \cite{ref:4}.
\end{proof}

The group $\To$ is the connected component of the identity of the topological group $\Pic^0_F$. 
 Each class of Arakelov divisors in $\To$ is represented by a divisor of the form $D = (O_F, u)$ for some  
$  u \in (\mathbb{R}^{\times}_{+})^3$ and $N(u)=1$.  Here $u$ is unique up to multiplication by units $\varepsilon \in O_F^\times$ (see Section 6 in \cite{ref:4}).

\subsection{The function $h^0$}\label{h0}
 Let $D=(I,u)$ be an Arakelov divisor of $F$. 
We denote by  $$k^0(D) = \sum_{f \in I}e^{-\pi\|u f\|^2} = \sum_{ \mathsf{x} \in u I}e^{-\pi\| \mathsf{x} \|^2}\hspace*{1cm}  \text{ and } \hspace*{1cm}  h^0(D)=\log(k^0(D)).$$ 
 The function $h^0$ is  well defined on $\Pic^0_F$ and analogous to the dimension of the Riemann-Roch space $H^0(D)$ of a divisor $D$ on an algebraic curve. See Proposition 4.3 in \cite{ref:4} and \cite{ref:3} for full details.
 
 \begin{lemma} \label{h0sym}
 	The function $h^0$ on $\To$ is invariant under the action of $\sigma$. In other words
 	$$h^0(D) = h^0(\sigma(D)) \text{ for all } D \in \To.$$
 \end{lemma}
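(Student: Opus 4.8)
The plan is to show that the Galois automorphism $\sigma$ acts on the lattice associated to a divisor in $\To$ by an isometry of $\mathbb{R}^3$, so that the sum defining $k^0$ is merely reindexed. First I would fix a representative $D = (O_F, u)$ of a class in $\To$, with $u \in (\mathbb{R}^{\times}_{+})^3$ and $N(u)=1$ as in Section \ref{Pic}. The point is to identify the action of $\sigma$ on $\To$ explicitly: since $\Phi(\sigma(f)) = (\sigma(f), \sigma^2(f), f)$ is obtained from $\Phi(f) = (f, \sigma(f), \sigma^2(f))$ by a cyclic shift of coordinates, the automorphism $\sigma$ induces on $F \otimes \mathbb{R} \cong \mathbb{R}^3$ exactly that coordinate permutation $\rho\colon (x_0,x_1,x_2)\mapsto(x_1,x_2,x_0)$, and correspondingly $\sigma(D) = (\sigma(O_F), u') = (O_F, u')$ with $u' = (u_2,u_0,u_1)$ (here one uses $\sigma(O_F)=O_F$, so that $\sigma(D)$ indeed stays in $\To$). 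I would spell out this index bookkeeping carefully so that the direction of the shift is unambiguous.

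Next comes the key step. The permutation $\rho$ is an isometry of $\mathbb{R}^3$, and Lemma \ref{lengf} is precisely the reflection of this fact at the level of $\|\cdot\|$. I claim $\rho$ maps the lattice $u O_F$ associated to $D$ onto the lattice $u' O_F$ associated to $\sigma(D)$: for $f \in O_F$ one computes $\rho(u f) = (u_1\sigma(f),\, u_2\sigma^2(f),\, u_0 f) = u'\,\Phi(\sigma(f))$, and as $f$ ranges over $O_F$ so does $\sigma(f)$, whence $\rho(u O_F) = u' O_F$. Thus the lattices associated to $D$ and to $\sigma(D)$ are isometric via $\rho$.

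Finally, since $k^0(D) = \sum_{\mathsf{x}\in u O_F} e^{-\pi\|\mathsf{x}\|^2}$ depends only on the isometry class of the associated lattice, applying $\rho$ term by term gives $k^0(\sigma(D)) = k^0(D)$, and therefore $h^0(\sigma(D)) = \log k^0(\sigma(D)) = \log k^0(D) = h^0(D)$. The only mildly delicate point is the first step, namely pinning down how the Galois action on $\Pic^0_F$ translates into the stated permutation of the entries of $u$ while keeping the ideal part equal to $O_F$; once that is settled, everything reduces to the observation that $k^0$ is a function of the associated lattice up to isometry, which is immediate from the definitions in Sections \ref{Pic} and \ref{h0}.
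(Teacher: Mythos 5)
Your proposal is correct and takes essentially the same approach as the paper: the paper likewise notes that $\sigma(D)=(O_F,(u_1,u_2,u_0))$ and verifies $\|\sigma(u)\,\sigma(f)\|^2=\|uf\|^2$, so that $k^0(\sigma(D))$ is just the reindexed theta sum — exactly your "isometric permutation $\rho$ of the associated lattice" argument in coordinates. One bookkeeping slip to fix when writing it up: your displayed identity $\rho(uf)=u'\,\Phi(\sigma(f))$ forces $u'=(u_1,u_2,u_0)$ (the paper's convention), not $(u_2,u_0,u_1)$ as stated in your first paragraph; the conclusion is unaffected either way, since summing over $f\in O_F=\sigma(O_F)$ absorbs a cyclic shift in either direction.
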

 \begin{proof}
 	Let $D = (O_F, u) \in \To$ with $u=(u_0, u_1, u_2) \in (\mathbb{R}_{>0})^3$. Then $\sigma(D) = (\sigma(O_F), \sigma(u))=(O_F, (u_1, u_2, u_0))$ since $F$ is Galois.
 	Hence 
 	$$\|\sigma(u) \sigma(f)  \|^2= u_1^2 [\sigma(f)]^2  + u_2^2 [\sigma^2(f)]^2+u_0^2 [\sigma^0(f)]^2=\sum_{ i=0}^{2}u_i^2 [\sigma^i(f)]^2 = \|u f\|^2  \text{ for all } f \in O_F.$$
 	Consequently, 
 	$$k^0(\sigma(D)) = \sum_{f \in O_F}e^{-\pi\|\sigma(u) f\|^2}=\sum_{\sigma(f)  \in O_F}e^{-\pi\|\sigma(f)  \sigma(u)\|^2}=\sum_{f  \in O_F}e^{-\pi\|u f\|^2}=k^0(D).$$
 \end{proof}

\subsection{The road map of the proof of Theorem \ref{thmmain}}
Let $D=(I, u)$ be an Arakelov divisor of degree $0$ with $L$ the lattice associated to $D$. We write  
$$k^0(D) = 1 + S_{1} + S_{2}  \text{ where } $$ 
$$ S_{1} = \sum_{\substack{ f \in I\backslash\{0\} \\ \| u f\|^2 < 3\cdot 2^{2/3}}}e^{-\pi \|u f\|^2 }
\hspace*{0.5cm}\text{ and } \hspace*{0.5cm} S_{2} = \sum_{\substack{ f \in I \\ \| u f\|^2 \geq 3\cdot 2^{2/3} }}e^{-\pi \|u f\|^2 }
= \sum_{\substack{ \mathsf{x} \in L\backslash\{0\} \\ \| \mathsf{x}\|^2 \ge 3\cdot 2^{2/3}}}e^{-\pi \|\mathsf{x}\|^2 }.$$
To prove Theorem \ref{thmmain}, we will show that $k^0(D)<k^0(D_0)$.
An upper bound on $S_2$ is given in Corollary \ref{sum}, and Lemma \ref{S1} provides an upper bound on $S_1$ that is sufficient to obtain Theorem \ref{thmmain}. When $I$ is not principal, then $S_1 = 0$ --- this is the case of Section \ref{case1} --- and the theorem is proved.  Otherwise, the class of $D$ in $\Pic_F^0$ is the image of an element $w + \Lambda \in \To$ with $w \in \mathcal{F}$ (Section \ref{case2}). If the length of $w$ is not too short, then Lemma \ref{Bomega2} shows that $S_1$ contains two identical collections of at most four terms each, and each term can be bounded above using Lemma \ref{Bomega}; this is done in cases \ref{case2a}--\ref{case2c} in Section \ref{case2}. Then Lemma \ref{S1} once again yields the result of Theorem \ref{thmmain}. Finally, if the length of $w$ is short (case \ref{case2d} in Section \ref{case2}), then the bound on $S_1$ in Lemma \ref{S1} cannot be attained; instead, we apply Propositions \ref{equiv} and \ref{Gat1}--\ref{sum3} to prove Theorem \ref{thmmain} directly.

The next section provides upper bounds on $S_1$ and $S_2$ which are used in the proof of Theorem \ref{thmmain}.

\subsection{Some estimates}\label{sec1}
Let $L$ be a lattice in $\mathbb{R}^3$ with the length of the shortest vectors  $\lambda$.  
\begin{lemma}\label{err}
	Let $M \geq \lambda^2\geq a^2>0$ and let $\alpha >0$.	
 	Then  
 		$$\sum_{\substack{ \mathsf{x}  \in L   \\ \|\mathsf{x} \|^2 \geq  M }}e^{-\alpha \|\mathsf{x} \|^2 } \leq \alpha  \int_{M}^{\infty} \!  \left( \left( \frac{2\sqrt{t}}{a} +1\right)^3 - \left( \frac{2\sqrt{M}}{a} -1\right)^3 \right) e^{- \alpha  t}\, \mathrm{d}t.$$
\end{lemma}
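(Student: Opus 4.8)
The plan is to bound the discrete sum $\sum_{\|\mathsf{x}\|^2 \ge M} e^{-\alpha\|\mathsf{x}\|^2}$ by a Riemann–Stieltjes-type integral against the counting function of lattice points, and then to estimate that counting function by a volume/packing argument. First I would introduce $N(R) = \#\{\mathsf{x}\in L : \|\mathsf{x}\| \le R\}$ and write the sum as a Stieltjes integral $\int_{\sqrt M}^\infty e^{-\alpha R^2}\,dN(R)$; integrating by parts (the boundary term at infinity vanishes because $N(R)$ grows polynomially while $e^{-\alpha R^2}$ decays) gives
\begin{equation*}
\sum_{\substack{\mathsf{x}\in L\\ \|\mathsf{x}\|^2\ge M}} e^{-\alpha\|\mathsf{x}\|^2}
= \int_{\sqrt M}^{\infty} \bigl(N(R) - N(\sqrt M{}^{-})\bigr)\, 2\alpha R\, e^{-\alpha R^2}\, dR,
\end{equation*}
where I subtract the value $N(\sqrt M{}^-)$ (the count of points of norm strictly less than $\sqrt M$) so that the integrand only sees the points actually being summed. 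Substituting $t = R^2$ turns this into $\alpha\int_M^\infty \bigl(N(\sqrt t) - N(\sqrt M{}^-)\bigr)e^{-\alpha t}\,dt$, which matches the shape of the claimed bound once I control the bracketed difference.

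The key geometric input is the standard packing estimate: balls of radius $a/2$ (where $a \le \lambda$, so these balls, one centered at each lattice point, are disjoint) around the lattice points of $L\cap B(0,R)$ all lie inside $B(0, R + a/2)$, so comparing volumes gives $N(R)\,(a/2)^3 \le (R + a/2)^3$, i.e. $N(R) \le (2R/a + 1)^3$. For the lower count to be subtracted I use the reverse inclusion: every point of norm $< \sqrt M$ has its ball of radius $a/2$ contained in $B(0, \sqrt M + a/2)$ — that is not quite what I want; instead I want a lower bound on $N(\sqrt M{}^-)$, which I get by noting that the ball $B(0, \sqrt M - a/2)$ is covered by the translated balls, so $N(\sqrt M{}^-)\,(a/2)^3 \ge (\sqrt M - a/2)^3$, giving $N(\sqrt M{}^-) \ge (2\sqrt M/a - 1)^3$ (and this is harmless when $2\sqrt M/a \le 1$ since then the cube is $\le 0$, using $a^2 \le M$ so the quantity is meaningful). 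Combining, for $t \ge M$,
\begin{equation*}
N(\sqrt t) - N(\sqrt M{}^-) \le \left(\frac{2\sqrt t}{a}+1\right)^{3} - \left(\frac{2\sqrt M}{a}-1\right)^{3},
\end{equation*}
and plugging this into the integral representation above yields exactly the asserted inequality.

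The hypothesis $M \ge \lambda^2$ enters to guarantee that the only lattice point of norm $< \sqrt M$ that could be "missed" versus "counted" issues are benign — more precisely it ensures $N(\sqrt M{}^-) \ge 1$ is consistent and that the regions $B(0,\sqrt M \pm a/2)$ behave as expected — while $a \le \lambda$ is exactly the disjointness condition for the packing balls. The main obstacle, and the point requiring care rather than cleverness, is getting the two counting-function estimates to point in the right directions simultaneously: I need an \emph{upper} bound on $N(\sqrt t)$ and a \emph{lower} bound on the subtracted term $N(\sqrt M{}^-)$, and I must check that the lower bound is not vacuous or wrongly signed when $\sqrt M$ is comparable to $a$; handling that boundary case (where $(2\sqrt M/a - 1)^3$ may be negative, which only weakens the bound in the right direction) is the one place to be attentive. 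The integration-by-parts justification (decay at infinity, and the jump-discontinuity bookkeeping of $N$) is routine and I would state it briefly.
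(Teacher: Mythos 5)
Your overall skeleton (write the tail sum as a Stieltjes integral against the counting function $N(R)$, integrate by parts, substitute $t=R^2$, and then control the lattice-point count by a packing argument with balls of radius $a/2$) is the right one, and it is essentially the argument the paper imports from Lemma 3.2 of \cite{Tran2}. However, your justification of the key counting inequality contains a genuine error. You bound $N(\sqrt{t})$ from above by packing, and then claim the lower bound $N(\sqrt{M}^-) \ge (2\sqrt{M}/a - 1)^3$ on the grounds that ``the ball $B(0,\sqrt{M}-a/2)$ is covered by the translated balls.'' That covering claim is false: since $a\le\lambda$, the balls of radius $a/2$ centered at lattice points are pairwise disjoint, i.e.\ they form a packing, and a packing by balls of radius at most half the minimal distance never covers a region in $\mathbb{R}^3$ (its density is at most that of the densest sphere packing, about $0.74<1$). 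Worse, the asserted lower bound is not merely unjustified but false: for $L=\mathbb{Z}^3$, $a=\lambda=1$ and $M$ large, one has $N(\sqrt{M}^-)\sim \frac{4\pi}{3}M^{3/2}\approx 4.19\,M^{3/2}$, whereas $(2\sqrt{M}-1)^3\sim 8\,M^{3/2}$.

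The inequality you actually need, $N(\sqrt{t})-N(\sqrt{M}^-) \le \left(\frac{2\sqrt{t}}{a}+1\right)^3-\left(\frac{2\sqrt{M}}{a}-1\right)^3$, is nevertheless true, but it must be proved in one stroke rather than by a separate upper bound and a separate lower bound. The difference counts exactly the lattice points $\mathsf{x}$ with $\sqrt{M}\le\|\mathsf{x}\|\le\sqrt{t}$; the balls of radius $a/2$ centered at these points are pairwise disjoint (because $a\le\lambda$) and are all contained in the spherical shell $\{y:\ \sqrt{M}-a/2\le\|y\|\le\sqrt{t}+a/2\}$, the inner radius being positive since $\sqrt{M}\ge a>a/2$ by hypothesis. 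Comparing volumes, the count is at most $\bigl[(\sqrt{t}+a/2)^3-(\sqrt{M}-a/2)^3\bigr]/(a/2)^3=\left(\frac{2\sqrt{t}}{a}+1\right)^3-\left(\frac{2\sqrt{M}}{a}-1\right)^3$, the factor $\frac{4\pi}{3}$ cancelling. With this repair, the rest of your argument (the integration by parts, the vanishing boundary term at infinity, and the substitution $t=R^2$) is routine and correct, and the lemma follows.
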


\begin{proof}
This proof is obtained by using an argument similar to the proof of Lemma 3.2 in \cite{Tran2} with the degree of the number field $n =3$ and by replacing $\pi$ with $\alpha$. 
\end{proof}

\begin{corollary}\label{sum} 
Assume  that $\lambda^2  \geq 3$. We have  
$$\sum_{\substack{ \mathsf{x} \in L   \\ \|\mathsf{x} \|^2 \geq 3\cdot 2^{2/3} }}e^{-\pi \|\mathsf{x} \|^2 } < 137.648\cdot 10^{-6},$$
$$\sum_{\substack{ \mathsf{x} \in L   \\ \|\mathsf{x} \|^2 \geq 10 }}e^{-(\pi-1/2) \|\mathsf{x}  \|^2 } < 0.001 \cdot 10^{-6} \hspace*{1cm} \text{ and } \sum_{\substack{ \mathsf{x} \in L   \\ \|\mathsf{x} \|^2 \geq 10 }}e^{-1.568075 \|\mathsf{x}  \|^2 } < 23.399 \cdot 10^{-6}.$$
\end{corollary}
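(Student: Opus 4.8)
The plan is to apply Lemma \ref{err} three times, once for each of the three sums, choosing the parameters $\alpha$, $M$, and $a$ appropriately in each case and then bounding the resulting elementary integral from above by a rational (or decimal) constant. Throughout we use the hypothesis $\lambda^2 \ge 3$, which guarantees that the condition $M \ge \lambda^2 \ge a^2 > 0$ of Lemma \ref{err} is satisfied for suitable choices of $a$.

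For the first sum, I would take $\alpha = \pi$, $M = 3\cdot 2^{2/3}$, and $a^2 = 3$ (so $a = \sqrt{3}$, which is $\le \lambda$ by hypothesis, and $a^2 = 3 \le M$ since $2^{2/3} > 1$). Lemma \ref{err} then gives
\[
\sum_{\substack{ \mathsf{x} \in L \\ \|\mathsf{x}\|^2 \ge 3\cdot 2^{2/3}}} e^{-\pi\|\mathsf{x}\|^2} \le \pi \int_{3\cdot 2^{2/3}}^{\infty}\!\left(\left(\tfrac{2\sqrt{t}}{\sqrt3}+1\right)^3 - \left(\tfrac{2\sqrt{3\cdot 2^{2/3}}}{\sqrt3}-1\right)^3\right)e^{-\pi t}\,\mathrm{d}t.
\]
Expanding the cubes produces a finite sum of terms of the form (constant)$\cdot t^{k/2}e^{-\pi t}$ for $k \in \{0,1,2,3\}$; after the substitution $t = 3\cdot 2^{2/3} + s$ each of these can be integrated in closed form (the half-integer powers give incomplete Gamma values, which one bounds crudely, or one simply estimates $\int_M^\infty t^{k/2}e^{-\pi t}\,dt$ by pulling out $e^{-\pi M}$ and bounding the remaining integral). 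The upshot is a completely explicit numerical bound, and one checks by direct computation that it is below $137.648\cdot 10^{-6}$. The second and third sums are handled identically: for the second, take $\alpha = \pi - 1/2$, $M = 10$, $a^2 = 3$; for the third, take $\alpha = 1.568075$, $M = 10$, $a^2 = 3$. (Note $1.568075 < \pi - 1/2 \approx 2.6416$, which is why the third bound is larger than the second; the factor $e^{-\alpha M}$ with $M=10$ is what makes all three quantities tiny.) In each case Lemma \ref{err} reduces the estimate to the same kind of elementary integral of a cubic polynomial in $\sqrt{t}$ against $e^{-\alpha t}$, evaluated from $M$ to $\infty$.

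The only real work is the arithmetic: expanding $(2\sqrt t/a + 1)^3$, carrying out the integration term by term, and verifying that the resulting closed-form expression — a linear combination of $e^{-\alpha M}$ with coefficients built from $M$, $a$, $\alpha$ and $\sqrt\pi$ (from the Gaussian-type tails of the half-integer powers) — is strictly less than the stated decimal. I would organize this by writing $\int_M^\infty t^{k/2} e^{-\alpha t}\,\mathrm{d}t \le e^{-\alpha M}\,P_k(M,\alpha)$ for explicit rational functions $P_k$ (for even $k$ this is exact after integration by parts; for odd $k$ one bounds $\sqrt t \le \sqrt M \cdot \sqrt{t/M}$ or uses the standard bound on the complementary error function), sum the four contributions, and plug in the numbers. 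The main obstacle, such as it is, is bookkeeping: keeping the numerical estimates tight enough that they actually fall below the claimed thresholds (especially the first, $137.648\cdot 10^{-6}$, which is the least slack of the three), rather than any conceptual difficulty. Since Lemma \ref{err} already packages the geometry-of-numbers input, what remains is a one-variable calculus computation that can be carried out — and its conclusion verified numerically — without further ideas.
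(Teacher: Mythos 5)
Your proposal is correct and is essentially the paper's own proof: the paper simply says ``Use Lemma \ref{err} with $a=\sqrt{3}$,'' and your choices of $\alpha$ and $M$ for the three sums, followed by explicit evaluation of the resulting elementary integrals, are exactly the intended computation. The only (harmless) imprecision is your remark that $\lambda^2\ge 3$ guarantees $M\ge\lambda^2$; it only gives $\lambda^2\ge a^2$, but if $\lambda^2>M$ the sum and the bound from Lemma \ref{err} only shrink, so the stated estimates still follow.
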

\begin{proof}
	Use Lemma \ref{err} with $a=\sqrt{3}$.
\end{proof}

	
The following lemma is applied to prove Theorem \ref{thmmain} in cases \ref{case1} and \ref{case2a}--\ref{case2c}.

\begin{lemma}\label{S1}
If $S_1  < 0.000147634$, then $k^0(D_0)> k^0(D)$.
\end{lemma}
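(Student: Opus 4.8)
The plan is to compare $k^0(D)$ with $k^0(D_0)$ by splitting both sums according to the threshold $3\cdot 2^{2/3}$ and to use the fact that $D_0$ is the trivial class, for which the associated lattice is $O_F$ itself. First I would write $k^0(D_0) = 1 + S_1^{(0)} + S_2^{(0)}$, where the notation mirrors the road-map decomposition: $S_1^{(0)}$ collects the contributions of $f \in O_F\setminus\{0\}$ with $\|f\|^2 < 3\cdot 2^{2/3}$ and $S_2^{(0)}$ the tail. The key observation is that the minimal nonzero length in $O_F$ is tightly controlled: the units $\pm 1$ contribute $\|{\pm 1}\|^2 = 3$, and Proposition \ref{minlength} gives $\|g\|^2 \ge 2p/3 \ge 2\cdot 7/3 > 4$ for every $g \in O_F \setminus \mathbb{Z}$, while every other rational integer $m \ne 0, \pm 1$ has $\|m\|^2 = 3m^2 \ge 12$. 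Since $3 < 3\cdot 2^{2/3} \approx 4.7622 < 2p/3$ fails only to exclude the short vectors coming from $O_F \setminus \mathbb{Z}$ when $p = 7$ (where $2p/3 \approx 4.667 < 4.7622$), I need to be a little careful: for $p=7$ there may be vectors $g$ with $\|g\|^2 = 2p/3 < 3\cdot 2^{2/3}$. However, from the proof of Proposition \ref{minlength}, when $p=7$ the shortest vectors of $O_F\setminus\mathbb{Z}$ have length squared $(1+2p)/3 = 5 > 3\cdot 2^{2/3}$ in case ii), and $2p/3 = 14/3$ in case i); I would track which case actually occurs, but in any event the count of such vectors is small and their exponential weight $e^{-\pi\cdot 14/3}$ is tiny.

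The heart of the argument is then a clean lower bound on $S_1^{(0)}$. At minimum, $S_1^{(0)} \ge 2 e^{-3\pi}$, just from $f = \pm 1$. I would compute $2e^{-3\pi} = 2 \cdot e^{-9.42477\ldots} \approx 2 \cdot 8.0699\times 10^{-5} \approx 1.614\times 10^{-4}$, which exceeds the threshold $0.000147634$ in the hypothesis. So the plan is: under the hypothesis $S_1 < 0.000147634$, we get
\begin{align*}
k^0(D) &= 1 + S_1 + S_2 < 1 + 0.000147634 + 137.648\cdot 10^{-6},
\end{align*}
using Corollary \ref{sum} for $S_2$ (valid because the shortest vector $\lambda$ of the lattice $L$ associated to $D$ satisfies $\lambda^2 \ge 3$ — this is a standard consequence of $\deg D = 0$ and Minkowski/arithmetic-geometric-mean bounds, or one invokes it as known). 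On the other side,
\begin{align*}
k^0(D_0) &= 1 + S_1^{(0)} + S_2^{(0)} \ge 1 + 2e^{-3\pi} > 1 + 0.0001614.
\end{align*}
Comparing, $k^0(D_0) - k^0(D) > (1 + 0.0001614) - (1 + 0.000147634 + 0.000137648) > 0$ precisely when $2e^{-3\pi} > 0.000147634 + 137.648\cdot 10^{-6}$, i.e. $1.614\times10^{-4} > 2.853\times 10^{-4}$, which is \emph{false}. So a more refined lower bound on $S_1^{(0)}$ is needed.

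The fix — and this is the step I expect to be the main obstacle, namely squeezing enough out of $S_1^{(0)}$ — is to include the next shell of short vectors in $O_F$. Beyond $\pm 1$, the shortest vectors of $O_F \setminus \mathbb{Z}$ (from the proof of Proposition \ref{minlength}) have length squared $2p/3$ (case i) or $(1+2p)/3$ (case ii), and these come in multiples of $6$ because $K$ is hexagonal, so together with their $\mathbb{Z}$-translates one gets a substantial number of lattice points just above $3\cdot 2^{2/3}$ — but those fall in $S_2^{(0)}$, not $S_1^{(0)}$. Hence the genuinely useful extra input is: there is nothing between $\|{\pm1}\|^2 = 3$ and the next shell, so $S_1^{(0)}$ consists of \emph{exactly} the two terms from $\pm 1$ when $p \ge 9$ (since then $2p/3 \ge 6 > 3\cdot 2^{2/3}$), giving $S_1^{(0)} = 2e^{-3\pi}$ exactly, and for $p = 7$ at most finitely many more. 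So the comparison $k^0(D_0) > k^0(D)$ must instead be obtained by also exhibiting that $S_2^{(0)}$ is not too small — or, more likely, the intended reading is that Lemma \ref{S1} is applied only in the regime where $D \ne D_0$ and one already knows $S_1 < 0.000147634$, and one bounds $k^0(D_0)$ from below by $1 + 2e^{-3\pi} + S_2^{(0)}$ where $S_2^{(0)}$ over $O_F$ is bounded \emph{below} by its first nonzero shell. I would therefore carry out: (1) establish $S_1^{(0)} \ge 2e^{-3\pi}$; (2) establish $S_2^{(0)} \ge 6 e^{-\pi \cdot 2p}$ or similar from the hexagonal structure of $K$ (using case ii) worst case $\|f\|^2 = 2p$, $p \ge 7$); (3) combine with Corollary \ref{sum}'s upper bound $137.648\cdot 10^{-6}$ on $S_2$ and the hypothesis on $S_1$ to get $k^0(D_0) - k^0(D) > 2e^{-3\pi} + S_2^{(0)} - 0.000147634 - 137.648\cdot 10^{-6} > 0$, which now has genuine room: $2e^{-3\pi} \approx 1.614\times 10^{-4}$ against $\approx 2.853\times 10^{-4} - S_2^{(0)}$ — still requiring $S_2^{(0)} > 1.24\times 10^{-4}$, which is too large, so in fact the threshold $0.000147634$ in the lemma must have been chosen precisely so that $2e^{-3\pi} + (\text{the } p=7 \text{ extra shell or } S_2^{(0)} \text{ contribution}) - 137.648\cdot 10^{-6} = 0.000147634$; the cleanest path is to simply note $0.000147634 + 137.648\times 10^{-6} = 0.000285282 < 2e^{-3\pi} = 0.0003228\ldots$ — wait, $2e^{-3\pi}$: recomputing, $e^{-3\pi} = e^{-9.4248} = 8.070\times 10^{-5}$, so $2e^{-3\pi} = 1.614\times 10^{-4}$, not $3.2\times10^{-4}$. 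The decisive check is whether $2e^{-3\pi} \ge 0.000147634 + 137.648\cdot 10^{-6}$; if the author's constant makes this hold with the understanding that $S_1^{(0)}$ also absorbs a $p=7$ shell term $4e^{-\pi\cdot 14/3}$ or that $S_2^{(0)}$ is bounded below, then the proof closes. The concrete plan: verify numerically that $2e^{-3\pi} + 4e^{-14\pi/3} > 0.000147634 + 137.648\cdot 10^{-6}$ (using Proposition \ref{minlength} case i with $p=7$, or the appropriate shell otherwise), conclude $k^0(D_0) = 1 + S_1^{(0)} + S_2^{(0)} \ge 1 + 2e^{-3\pi} > 1 + S_1 + S_2 \ge k^0(D)$ whenever $S_1 < 0.000147634$, and flag the threshold's derivation — $0.000147634 \approx 2e^{-3\pi} - 137.648\cdot 10^{-6}$ — as exactly the constant forced by Corollary \ref{sum}. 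The main obstacle is thus the bookkeeping of precisely which short vectors of $O_F$ contribute to $S_1^{(0)}$ versus $S_2^{(0)}$ as $p$ ranges over $\{7, 9, 13, \dots\}$, and confirming the numerics leave a positive gap.
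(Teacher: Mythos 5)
Your strategy is the same as the paper's --- bound $S_2$ via Corollary \ref{sum} (after noting that the associated lattice satisfies $\lambda\ge\sqrt3$, which the paper gets from $\|uf\|^2\ge 3|N(uf)|^{2/3}$ together with $\deg D=0$), lower-bound $k^0(D_0)$ by $1+2e^{-3\pi}$, and compare --- but your write-up does not close. You correctly compute $2e^{-3\pi}\approx 1.614\cdot10^{-4}$ and correctly observe that this is smaller than $0.000147634+137.648\cdot10^{-6}$, so the comparison fails if one takes that tail constant at face value; you then try to rescue it with extra short vectors of $O_F$ or a lower bound on the tail of $k^0(D_0)$, show yourself that these contributions (of order $e^{-14\pi/3}\approx 4\cdot10^{-7}$) are orders of magnitude too small, and end with a ``decisive check'' $2e^{-3\pi}+4e^{-14\pi/3}>0.000147634+137.648\cdot10^{-6}$ that is simply false. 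As it stands, therefore, the proposal does not prove the lemma.

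What you missed is that the obstruction is a misplaced decimal, not a structural problem. Evaluating the bound of Lemma \ref{err} with $a=\sqrt3$, $M=3\cdot2^{2/3}$, $\alpha=\pi$ gives roughly $1.38\cdot10^{-5}$, i.e.\ the first constant in Corollary \ref{sum} should be read as $13.7648\cdot10^{-6}$; and indeed $2e^{-3\pi}-13.7648\cdot10^{-6}=0.0001613988-0.0000137648=0.000147634$, which is exactly the threshold in the statement and explains where it comes from. With this tail bound your original two lines finish the job, and this is precisely the paper's argument: $k^0(D)=1+S_1+S_2<1+0.000147634+13.7648\cdot10^{-6}=1+2e^{-3\pi}<k^0(D_0)$. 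So your diagnosis of the numerical mismatch was right (the printed chain ``$S_1<2e^{-3\pi}-137.648\cdot10^{-6}\le 0.000147634$'' in the paper suffers from the same slip), but the correct fix was to recompute the integral bounding $S_2$, not to hunt for additional mass in $k^0(D_0)$; in particular, none of your bookkeeping about which shells of $O_F$ land in $S_1^{(0)}$ versus $S_2^{(0)}$ is needed, since the crude bound $k^0(D_0)>1+2e^{-3\pi}$ suffices.
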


\begin{proof}
 Let $f \in I\backslash\{0\} $. Then $|N(f)|\ge N(I)$. Moreover 
 $N(I) N(u) =1$ since $\deg(D) = 0$. Hence
		$$\|u f\|^2 \geq 3 |N(u f)|^{2/3} = 3 |N(u) N( f)|^{2/3} = 3 \left(\frac{|N(f)|}{N(I)}\right)^{2/3} \geq 3.$$
		This holds for any nonzero $f \in I$.
Therefore, the length of the shortest vectors of the lattice $L$ is $ \lambda \geq \sqrt{3}$. 
Corollary \ref{sum} says that 
	$S_{2} < 137.648\cdot 10^{-6}.$

Subsequently, one obtains
$$k^0(D) =1+ S_1 + S_2 <  1+S_1+ 137.648\cdot 10^{-6}.$$
In addition, it is obvious that  $$k^0(D_0) > 1 + 2 e^{-3 \pi}.$$
 Thus, to prove that $k^0(D_0)> k^0(D)$, it is sufficient to prove the following.
 \begin{equation*}
 S_1  < 2 e^{-3 \pi} -137.648\cdot 10^{-6} \le 0.000147634.
 \end{equation*} 
\end{proof}

\section{Proof of Theorem \ref{thmmain} when $I$ is not principal}\label{case1}
	Since $I$ is not principal, $|N(f)|/N(I)\geq 2$ for all  $f \in  I \backslash \{0\}$. 
	 In addition, $N(I) N(u) =1$ since $\deg(D) = 0$. Therefore
	$$\|u f\|^2 \geq 3 |N(u f)|^{2/3} = 3 |N(u) N( f)|^{2/3} = 3 \left(\frac{|N(f)|}{N(I)}\right)^{2/3} \geq 3\cdot 2^{2/3}.$$
	This inequality holds for any nonzero $f \in I$. Hence  $S_1 = 0$ and Theorem \ref{thmmain} is proved by Lemma \ref{S1}.

\section{Proof of Theorem \ref{thmmain} when $I$ is principal}\label{case2}
In this section we will prove Theorem \ref{thmmain} when the ideal $I$ is principal. We will do this by further subdividing into four cases \ref{case2a}--\ref{case2d} based on the value of the conductor $p$ and the length of $w=-\log{u}$.
%
%
%
%

Given that $I$ is principal, we may write $I = f O_F$ for some $f \in F^{\times}$. In this case we have that 
$$D=(I,u)=(f O_F,u) = (f O_F, |f|^{-1})+ (O_F, u |f|)=(f^{-1})+ (O_F, u'),$$
where $(f^{-1})$ is the principal Arakelov divisor generated by $f^{-1}$ and 
$u'= u |f|= (u_i |\sigma_i(f)|)_i \in (\mathbb{R}^{\times}_{+})^3$.
Thus $D$ and $D'=(O_F, u')$ are in the same class of divisors in $\Pic^0_F$. In other words, we have
$h^0(D) = h^0(D')$. Therefore, without loss of generality we can assume that $D$ has the form $(O_F, u)$ for some $u \in (\mathbb{R}^{\times}_{+})^3$ and $N(u)=1$.

With the notations in Section \ref{sec2}, the vector $u$ can be chosen such that  $w =-\log{u} \in \mathcal{F}$. Thus  
$$w= \alpha_1 \cdot b_1 + \alpha_2 \cdot b_2 \text{ for some } \alpha_1, \alpha_2 \in \left(-\frac{1}{2}, \frac{1}{2}\right].$$
Denote by 
$$B'(w) = \{ f \in O_F^{\times}: \|u f\|^2 < 3\cdot 2^{2/3} \},$$
and  $ v_f = \log f \in \Lambda = \log O_F^{\times}$ for each $f \in B'(w)$. 

To prove Theorem \ref{thmmain} for cases \ref{case2a}--\ref{case2c}, it is sufficient to show  that $S_1  < 0.000147634$ for all $w=(x, y, z) \ne (0,0,0)$ (see Lemma \ref{S1}). This can be done by finding a suitable lower bound for $\|u f\|^2$ for each $f \in B'(w)$ by the following lemma.

\begin{lemma}\label{Bomega2}
	Assume that  $w =-\log{u} \in \mathcal{F}$. We have 
	$$S_{1} = \sum_{f \in B'(w)}e^{-\pi \|u f\|^2 }.$$
\end{lemma}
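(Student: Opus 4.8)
The plan is to show that the sum defining $S_1$ is supported exactly on the set $B'(w)$, i.e.\ that the only nonzero elements $f\in O_F$ with $\|uf\|^2<3\cdot 2^{2/3}$ are units. First I would recall the identity $k^0(D)=1+S_1+S_2$ from the road map, where $S_1$ runs over those $f\in I\setminus\{0\}$ with $\|uf\|^2<3\cdot 2^{2/3}$; since we have reduced to $D=(O_F,u)$ with $N(u)=1$, the index set for $S_1$ is $\{f\in O_F\setminus\{0\}:\|uf\|^2<3\cdot 2^{2/3}\}$. So it suffices to prove that this index set is precisely $B'(w)=\{f\in O_F^\times:\|uf\|^2<3\cdot 2^{2/3}\}$.

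The inclusion $B'(w)\subseteq\{f\in O_F\setminus\{0\}:\|uf\|^2<3\cdot 2^{2/3}\}$ is immediate since units are nonzero. For the reverse inclusion, take $f\in O_F\setminus\{0\}$ with $\|uf\|^2<3\cdot 2^{2/3}$. By the AM--GM argument already used in the proof of Lemma \ref{S1}, $\|uf\|^2\ge 3|N(uf)|^{2/3}=3|N(u)N(f)|^{2/3}=3|N(f)|^{2/3}$, using $N(u)=1$. Hence $3|N(f)|^{2/3}<3\cdot 2^{2/3}$, which forces the nonzero integer $|N(f)|$ to be $1$, i.e.\ $f\in O_F^\times$. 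Therefore $f\in B'(w)$, and the two index sets coincide, giving the claimed formula for $S_1$.

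The one point that requires a word of care — and which I expect is the only mild obstacle — is the appearance of $w$: the statement is phrased for $w=-\log u\in\mathcal{F}$, but nothing in the argument actually uses that $w$ lies in the fundamental domain $\mathcal{F}$; the hypothesis is there only to fix the normalization $D=(O_F,u)$ with $N(u)=1$ that makes $N(u)=1$ available and ties $B'(w)$ to the specific representative under consideration. I would simply note that under the reduction carried out at the start of Section \ref{case2} we may assume $D=(O_F,u)$ with $N(u)=1$ and $w=-\log u\in\mathcal{F}$, so that the index set of $S_1$ is $\{f\in O_F\setminus\{0\}:\|uf\|^2<3\cdot 2^{2/3}\}$, and then run the norm estimate above. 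This completes the proof.
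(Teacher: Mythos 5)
Your argument is correct and coincides with the paper's own proof: both use $N(u)=1$ together with the AM--GM bound $\|uf\|^2\ge 3|N(uf)|^{2/3}=3|N(f)|^{2/3}$ to force $|N(f)|=1$, hence $f\in O_F^\times$, identifying the index set of $S_1$ with $B'(w)$. No gaps.
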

\begin{proof}
Suppose that $f \in O_F \backslash \{0\}$ and $\|u f\|^2 < 3\cdot 2^{2/3}$. One has $N(u f) = N(f)$ since $N(u)=1$.  
It follows that
$$3\cdot 2^{2/3} > \|u f\|^2 \geq 3 |N(u f)|^{2/3} = 3 |N(f)|^{2/3}.$$
This implies that $|N(f)|=1$. In other words, $f \in O_F^{\times}$.	
\end{proof}
By choosing $w \in \mathcal{F}$, one obtains that $\|w\| \le \sqrt{3} \lambda_1/2$. We first consider the case $p\ge 9$, then similarly the case $p=7$.

\subsection{Case $p \ge 9$ and  $0.324096 < \|w\| \le \sqrt{3} \lambda_1/2$ }\label{case2a}

Lemma \ref{lambda1} provides that $\lambda_1 \ge 1.296382$. 
The lower bound on $\|w\|$ leads to $\|u\|^2 \ge 3.194928$. 

Let $ f \in B'(w)$. Then
$\| \log (uf)\| = \| \log f + \log u \|= \|v_f - w\|$.
It follows that  $\|v_f - w\| < \lambda_1$  since otherwise $\| u f\|^2 > 3\cdot 2^{2/3}$. Thus,  $ f \in B(w)$. Therefore $B'(w) \subset B(w)$. 

By Lemma \ref{Bomega}, 
$B'(w)  \subset \{\pm 1, \pm \mathbf{x}_1, \pm\mathbf{x}_2, \pm \mathbf{x}_3\} \subset O_F^{\times}$ where 
$$\|\log \mathbf{x}_1-w\| \ge 0.561350, \|\log\mathbf{x}_2-w\| \ge 0.648191 \text{ and } \|\log \mathbf{x}_3-w\| \ge   1.122700.$$
Since $\log \mathbf{x}_i-w = \log(u \mathbf{x}_i)$ for all $1 \le i \le 3$, we obtain the following.
$$\| u \mathbf{x}_1\|^2 \ge 3.568526, \| u \mathbf{x}_2\|^2 \ge 3.742282 \text{ and } \| u \mathbf{x}_3\|^2 \ge 5.161825>3\cdot 2^{2/3}.$$
As a consequence, $\mathbf{x}_3 \not \in B'(w) $ and then  
$$S_1 \le   \sum_{f \in B(w)\backslash\{\pm \mathbf{x}_3\} }e^{-\pi \|u f\|^2 } \le 2 \left(e^{-\pi \|u \|^2}+ e^{-\pi \|u \mathbf{x}_1 \|^2} + e^{-\pi \|u \mathbf{x}_2\|^2}  \right)$$
$$ \le  2 \left(e^{-3.194928\pi }+ e^{- 3.568526 \pi}  +e^{-3.742282\pi } \right)< 0.00014.$$

\subsection{Case $p \ge 9$ and $0.170856 \le \|w\| \le 0.324096$}\label{case2b}
	The lower bound on $\|w\|$ implies that $\|u\|^2 \ge 3.055940$.
Lemma \ref{lambda1} says that $\lambda_1 \ge 1.296382$. As the proof in \ref{case2a}, $ B'(w) \subset B(w) \backslash\{\pm \mathbf{x}_3\}$. Hence $ \#B'(w) \le \# B(w) -2 \le 6$ by Lemma \ref{Bomega}.

 For each $f \in B'(w)\backslash\{\pm 1\}$, we have
$$\| \log (u f)\| = \| \log f + \log u\|= \|v_f - w\| \ge | \|v_f\| - \|w \|| \ge \lambda_1 -0.324096 \ge 0.972286.$$
 It follows that  $\|u f\|^2 \ge 4.628260$. Consequently, 
$$S_1 = \sum_{f \in B'(w)}e^{-\pi \|u f\|^2 } =2 e^{-\pi \|u \|^2}+ \sum_{f \in B'(w)\backslash\{\pm 1\}} e^{-\pi \|u f\|^2 }$$
$$ \le 2 e^{-3.055940 \pi }+ 6e^{- 4.62826\pi }<0.00014.$$

\subsection{Case $p =7$ and $0.170856  \le \|w\| \le  \sqrt{3} \lambda_1/2$}\label{case2c}
 Lemma \ref{lambda1} shows that $\lambda_1 \ge 1.025134$.   
By an argument similar as the case $ p \ge 9$, one obtains that
$$S_1 \le   \sum_{f \in B(w) }e^{-\pi \|u f\|^2 } \le 2 \left(e^{-\pi \|u \|^2}+ e^{-\pi \|u \mathbf{x}_1 \|^2} + e^{-\pi \|u \mathbf{x}_2\|^2} + e^{-\pi \|u \mathbf{x}_3\|^2} \right),$$
and 
 $\| u \mathbf{x}_3\|^2\ge 4.36359 $.\\
 The lower bounds for $\| u \|^2$, $\| u \mathbf{x}_1\|^2$ and $\| u \mathbf{x}_2\|^2$ and an upper bound for $S_1$ are provided in the following table.
 
 \begin{center}
\captionof{table}{$p =7$ and $0.170856  \le \|w\| \le  \sqrt{3} \lambda_1/2$}
    \label{table1}
    \resizebox{\textwidth}{!}{
     \begin{tabular}{|c| c |c|c|c|c |c|}
         \hline
          $\|w\|$ & $\| u \|^2 $ & $ \|\log \mathbf{x}_1-w\|$ & $\| u \mathbf{x}_1\|^2 $ & $ \|\log \mathbf{x}_2-w\|$ & $\| u \mathbf{x}_2\|^2$  & $S_1 $ \\ [0.5ex] 
       
           $\in $ & $ \ge $ & $ \ge $ & $ \ge $ & $ \ge$ & $\ge$  & $ \le$ \\ [0.5ex] 
                           
          \hline 
         $[\lambda_1/2, \sqrt{3} \lambda_1/2]$  & $3.47238$ & $\sqrt{3} \lambda_1/4 $ & $3.35804$ & $\lambda_1/2$ & $3.47238$  & $ 0.000142$ \\ 
         
           &  & $ \approx 0.443896$ &  &   $\approx 0.512567$ &  &  \\ 
           
        \hline 
        $[3\lambda_1/8, \lambda_1/2)$  & $3.27124$ & $\lambda_1/2$ & $3.47238$ & $\lambda_1/2$ & $3.47238$  & $ 0.000145$ \\ 
                 
        &  & $ \approx 0.512567$ &  &   $ \approx 0.512567$ &  &  \\

       \hline 
        $[\lambda_1/4, 3\lambda_1/8)$  & $3.12354$ & $5\lambda_1/8$ & $3.72586$ & $5\lambda_1/8$ & $3.72586$  & $ 0.000145$ \\ 
                   
        &  & $ \approx 0.64070$ &  &   $ \approx 0.64070$ &  &  \\

          
                
              
                
        \hline 
        $[\lambda_1/5, \lambda_1/4)$  & $3.079938$ & $3\lambda_1/4$ & $4.031758$ & $3\lambda_1/4$ & $4.031758$  & $ 0.000141$ \\ 
                      
             &  & $ \approx 0.768851$ &  &   $ \approx 0.768851$ &  &  \\

        \hline 
        $[\lambda_1/6, \lambda_1/5)$  & $3.055939$ & $4\lambda_1/5$ & $4.169016$ & $4\lambda_1/5$ & $4.169016$  & $ 0.000146$ \\ 
                        
       &  & $ \approx 0.820107$ &  &   $ \approx 0.820107$ &  &  \\      
       \hline                         
        \end{tabular}
        }
\end{center}

 Note that $\lambda_1/6 \ge 0.170856 $. Table \ref{table1} shows that $S_1 \le 0.000146$ for all $0.170856  \le \|w\| \le  \sqrt{3} \lambda_1/2$. Then Theorem \ref{thmmain} is claimed by Lemma \ref{S1}.   

\subsection{Case $p \ge 7$ and  $0< \|w\| < 0.170856$}\label{case2d}
 We rewrite $u$ as $u= (e^x, e^y, e^z)$ where $w = -\log u =(-x, -y, -z) \in \mathbb{R}^3$ and $x+ y + z = 0$.
 Now let $f \in O_F$ and denote by $f_i = \sigma^i(f)$ for $ i = 0, 1, 2$. Then 
   $$\|u f\|^2= e^{2 x} |\sigma^0(f)|^2 + e^{2 y} |\sigma(f)|^2 + e^{2 z} |\sigma^2(f)|^2 =  f_0^2 e^{2 x} + f_1^2 e^{2 y}  +  f_2^2 e^{2 z}.$$
    We now set
  \begin{equation*}\label{eqG1}
    G_1(u,f) =e^{-\pi[ \|u f\|^2 - \|f\|^2]} - 1= e^{-\pi [ (e^{2 x}-1)f_0^2 +  (e^{2 y}-1)f_1^2  +  (e^{2 z}-1)f_2^2 ]} -1,
  \end{equation*}
  
  \begin{equation*}\label{eqG2}
  G_2(u,f) = G_1(u,f) + G_1(u,\sigma(f)) + G_1(u,\sigma^2(f)) \text{ and }
 \end{equation*}
 
\begin{equation*}\label{eqG}
  G(u,f) = e^{-\pi \|f\|^2} G_2(f,u)/\|w\|^2 \text{ for all } f \in O_F,
 \end{equation*}
 	$$T_1(u)= G(u, 1) + G(u, -1)=2 G(u, 1),$$ 
 	$$T_2(u)=\sum_{f \in O_F, \hspace*{0.1cm}\|f\|^2 \ge 10 } G(u, f) \hspace*{1cm} \text{    and    } \hspace*{1cm} T_3(u)=\sum_{f \in O_F\backslash\{0, \pm 1\}, \hspace*{0.1cm}\|f\|^2 < 10 } G(u, f).$$

 	By Proposition \ref{equiv}, to prove Theorem \ref{thmmain}, it is sufficient to show that for all $ u= (e^x, e^y, e^z) \text{ with } x+y + z = 0 \text{ and } \|w\|^2 = x^2 + y^2 +z^2 \in (0, 0.170856^2)$,
	$$\sum_{f\in O_F} G(u,f) <0.$$
	Since $\sum_{f \in O_F } G(u, f)  = T_1(u) + T_2(u) + T_3(u)$, the last inequality is equivalent to $ T_1(u) + T_2(u) + T_3(u) <0$, which is true by Propositions \ref{Gat1}--\ref{sum3}.

We now complete the proof of our main theorem in this case by proving the following results that can be achieved by using the Galois property of $F$, the Taylor expansion of the function $e^t$ and therefore the symmetry of  $G_2(u,f)$.

\begin{lemma}\label{G}
	We have $G(u,f) = G(u,\sigma(f)) =  G(u,\sigma^2(f))$ for all $u \in (\mathbb{R}_{>0})^3$ and $ f\in O_F$.
\end{lemma}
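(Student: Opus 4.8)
The plan is to unwind the definition of $G(u,f)$ and reduce the claimed equality to the analogous equality for $G_2$, which will follow from the symmetry of that quantity under cyclic permutation of its argument together with Lemma \ref{lengf}. Recall that
\[
G(u,f) = e^{-\pi \|f\|^2}\, G_2(u,f)/\|w\|^2,
\]
so since $\|w\|^2$ does not depend on $f$, it suffices to show that $e^{-\pi\|f\|^2}G_2(u,f)$ is unchanged when $f$ is replaced by $\sigma(f)$ (and hence, iterating, by $\sigma^2(f)$). By Lemma \ref{lengf} we have $\|f\| = \|\sigma(f)\| = \|\sigma^2(f)\|$, so the scalar factor $e^{-\pi\|f\|^2}$ is already invariant, and the whole problem collapses to proving
\[
G_2(u,f) = G_2(u,\sigma(f)).
\]

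For this I would go back to the definition $G_2(u,f) = G_1(u,f) + G_1(u,\sigma(f)) + G_1(u,\sigma^2(f))$. Replacing $f$ by $\sigma(f)$ gives
\[
G_2(u,\sigma(f)) = G_1(u,\sigma(f)) + G_1(u,\sigma^2(f)) + G_1(u,\sigma^3(f)),
\]
and since $\sigma$ has order $3$ in $G$ we have $\sigma^3(f) = f$, so the three summands are exactly $G_1(u,\sigma(f))$, $G_1(u,\sigma^2(f))$, $G_1(u,f)$ — the same multiset as in $G_2(u,f)$, just cyclically reordered. Addition being commutative, the two sums are equal; applying the same argument once more (or replacing $f$ by $\sigma(f)$ in the identity just proved) yields $G_2(u,f) = G_2(u,\sigma(f)) = G_2(u,\sigma^2(f))$. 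Combining this with the invariance of $e^{-\pi\|f\|^2}/\|w\|^2$ gives the lemma.

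There is really no serious obstacle here: the statement is a formal consequence of the cyclic symmetry built into the definition of $G_2$ and of the norm-invariance recorded in Lemma \ref{lengf}. The only point that needs a word of care is that $\sigma$, as an element of the Galois group $G = \langle\sigma\rangle$ of the cyclic cubic field $F$, genuinely satisfies $\sigma^3 = \mathrm{id}$, so that $\sigma^3(f) = f$; this is what makes the cyclic reindexing close up. I would state the proof in two short lines: first note $\|f\| = \|\sigma(f)\| = \|\sigma^2(f)\|$ by Lemma \ref{lengf}, then observe that $G_2(u,f)$ is by construction a symmetric function of the triple $(f,\sigma(f),\sigma^2(f))$, hence invariant under the cyclic shift $f \mapsto \sigma(f)$, and conclude.
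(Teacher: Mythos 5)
Your proof is correct and follows essentially the same route as the paper: the paper's proof simply says the identity is seen by writing out the formulas and invoking Lemma \ref{lengf}, and your argument makes this explicit by noting that $G_2(u,\sigma(f))$ is the same three-term sum $G_1(u,f)+G_1(u,\sigma(f))+G_1(u,\sigma^2(f))$ cyclically reordered (since $\sigma^3=\mathrm{id}$), while the prefactor $e^{-\pi\|f\|^2}/\|w\|^2$ is invariant by Lemma \ref{lengf}. No gaps; your write-up just fills in the details the paper leaves implicit.
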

\begin{proof}
	This can be easily seen by writing down the formulas of $G(u,f)$,  $G(u,\sigma(f))$,  $G(u,\sigma^2(f))$ and by using the fact that $\|f \| = \| \sigma(f)\| = \|\sigma^2(f)\|$ for all $ f\in O_F$.
\end{proof}

\begin{proposition}\label{equiv}
	The conclusion of Theorem \ref{thmmain} is equivalent to the following.
	$$\sum_{f\in O_F} G(u,f) <0 \text{ for all } u= (e^x, e^y, e^z) \ne (1,1,1).$$
\end{proposition}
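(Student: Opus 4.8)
The plan is to establish the equivalence by unwinding the definition of $h^0$ and showing that the condition $k^0(D_0) > k^0(D)$ for all nontrivial $D \in \To$ translates, after a change of variables, into the stated inequality $\sum_{f \in O_F} G(u,f) < 0$. First I would recall that, by Section \ref{case2} (the reduction preceding this proposition) together with Section \ref{case1}, it suffices to prove Theorem \ref{thmmain} for divisors of the form $D = (O_F, u)$ with $u \in (\mathbb{R}^\times_+)^3$ and $N(u) = 1$; writing $u = (e^x, e^y, e^z)$ this forces $x + y + z = 0$, i.e. $w = -\log u \in \mathcal{H}$, and the trivial class $D_0$ corresponds to $u = (1,1,1)$, i.e. $w = 0$. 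So the desired statement ``$h^0$ attains its unique global maximum at $D_0$'' becomes ``$k^0((O_F,u)) < k^0((O_F,(1,1,1)))$ for every $u = (e^x,e^y,e^z) \neq (1,1,1)$ with $x+y+z=0$.''

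Next I would compute the difference $k^0(D) - k^0(D_0)$ termwise. Since $k^0((O_F,u)) = \sum_{f \in O_F} e^{-\pi \|u f\|^2}$ and $k^0(D_0) = \sum_{f \in O_F} e^{-\pi \|f\|^2}$, we get
\[
k^0(D) - k^0(D_0) = \sum_{f \in O_F} \left( e^{-\pi \|u f\|^2} - e^{-\pi \|f\|^2} \right) = \sum_{f \in O_F} e^{-\pi\|f\|^2}\left( e^{-\pi(\|uf\|^2 - \|f\|^2)} - 1 \right) = \sum_{f \in O_F} e^{-\pi\|f\|^2} G_1(u,f),
\]
using the definition of $G_1$. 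Here one must justify that the rearrangement is legitimate: both series converge absolutely (they are dominated by theta-like series), so the difference may be taken termwise and reordered freely. Now I would invoke the Galois symmetry: grouping the index set $O_F$ into $\sigma$-orbits and using Lemma \ref{lengf} ($\|f\| = \|\sigma(f)\| = \|\sigma^2(f)\|$), the sum $\sum_f e^{-\pi\|f\|^2} G_1(u,f)$ equals $\frac{1}{3}\sum_f e^{-\pi\|f\|^2}\big(G_1(u,f) + G_1(u,\sigma f) + G_1(u,\sigma^2 f)\big) = \frac13 \sum_f e^{-\pi\|f\|^2} G_2(u,f)$. Finally, dividing by the strictly positive constant $\|w\|^2$ (valid since $w \neq 0$) and by $3$ — neither of which changes the sign — we obtain that $k^0(D) - k^0(D_0) < 0$ is equivalent to $\sum_{f \in O_F} e^{-\pi\|f\|^2} G_2(u,f)/\|w\|^2 < 0$, i.e. $\sum_{f \in O_F} G(u,f) < 0$.

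The only genuine subtlety — and the step I would flag as the main obstacle — is the interchange of summation and the termwise/orbit-wise regrouping: one needs absolute convergence of $\sum_{f\in O_F} e^{-\pi\|uf\|^2}$ and $\sum_{f\in O_F} e^{-\pi\|f\|^2}$ (both standard, since $uf$ and $f$ range over lattices in $\mathbb{R}^3$ and a Gaussian sum over a lattice converges), after which Fubini/Tonelli-type rearrangement is automatic. Everything else is purely formal manipulation of the definitions of $G_1$, $G_2$, $G$, and the observation that $\|w\|^2 > 0$ precisely when $u \neq (1,1,1)$. I would also remark explicitly that the division by $\|w\|^2$ is the reason the statement is phrased with a strict nontriviality hypothesis $u \neq (1,1,1)$, and that this normalization by $\|w\|^2$ is what makes the later Taylor-expansion estimates in Propositions \ref{Gat1}--\ref{sum3} tractable.
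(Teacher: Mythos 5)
Your proposal is correct and takes essentially the same approach as the paper: compute the termwise difference $k^0(D)-k^0(D_0)=\sum_{f\in O_F}e^{-\pi\|f\|^2}G_1(u,f)$, symmetrize over the Galois action to replace $G_1$ by $\tfrac13 G_2$, and divide by $\|w\|^2>0$. The only cosmetic difference is that the paper symmetrizes by applying $\sigma$ to the divisor and citing Lemmas \ref{h0sym} and \ref{lengf}, while you reindex the sum over $O_F$ by $\sigma$-orbits directly --- the same computation, since Lemma \ref{h0sym} is itself proved by exactly that reindexing.
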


\begin{proof}
	The assumption $u= (e^x, e^y, e^z) \ne (1,1,1)$ implies that $\|w\|^2 = x^2 + y^2 + z^2 > 0$.  
	We have
	\begin{multline}\label{eqh1}
		k^0(D) - k^0(D_0) = \sum_{f \in O_F } \left( e^{-\pi \|u f\|^2} - e^{-\pi \|f\|^2}\right) = \sum_{f \in O_F } e^{-\pi \|f\|^2}\left( e^{-\pi[ \|u f\|^2 - \|f\|^2]} - 1\right)\\	
		=\sum_{f \in O_F } e^{-\pi \|f\|^2} G_1(u, f) \text{ for all } u \in (\mathbb{R}_{>0})^3.
	\end{multline}

	Since $F$ is Galois, we have $\sigma(O_F) = O_F$ and then similar results are obtained as below.
	\begin{multline}\label{eqh2}
		k^0(\sigma(D)) - k^0(\sigma(D_0)) = \sum_{\sigma(f) \in O_F } \left( e^{-\pi \|u \sigma(f)\|^2} - e^{-\pi \|\sigma(f)\|^2}\right) \\
		=\sum_{f \in O_F } e^{-\pi \|\sigma(f)\|^2} G_1(u,\sigma(f)) 
	\end{multline}
	
	\begin{multline}\label{eqh3}
		k^0(\sigma^2(D)) - k^0(\sigma^2(D_0)) = \sum_{\sigma^2(f) \in O_F } \left( e^{-\pi \|u \sigma^2(f)\|^2} - e^{-\pi \|\sigma^2(f)\|^2}\right) \\
		=\sum_{f \in O_F } e^{-\pi \|\sigma^2(f)\|^2} G_1(u,\sigma^2(f)).
	\end{multline}

	Lemma \ref{h0sym} says that $k^0(D) = k^0(\sigma(D)) = k^0(\sigma^2(D))$ and $k^0(D_0) = k^0(\sigma(D_0)) = k^0(\sigma^2(D_0))$. Moreover $\|f \| = \| \sigma(f)\| = \|\sigma^2(f)\|$ by Lemma \ref{lengf}. Taking the sum of \eqref{eqh1}--\eqref{eqh3} then using these equalities, the following result is implied. 
	\begin{multline*}\label{eqh4}
		3[k^0(D) - k^0(D_0) ]=  \sum_{f \in O_F } e^{-\pi \|f\|^2} G_2(u, f) 
		= \sum_{f \in O_F } G(u, f) \|w\|^2\text{ for all } u \in (\mathbb{R}_{>0})^3.
	\end{multline*}
	Hence, the following equivalences hold for all $u= (e^x, e^y, e^z) \ne (1,1,1).$
	$$k^0(D) < k^0(D_0) \Leftrightarrow [k^0(D)-k^0(D_0)]/\|w\|^2<0  \Leftrightarrow \sum_{f \in O_F } G(u, f)<0.$$
\end{proof}




\begin{proposition}\label{taylor1}
	Let $ u= (e^x, e^y, e^z) \text{ with } x+y + z = 0 \text{ and with } \|w\|^2 = x^2 + y^2 +z^2>0$. Then for all  $f \in O_F$,
	$$G(u, f) \le 4 \pi^2 \|f^2\|^2 e^{-\pi \|f\|^2} \left(1 + \frac{1}{2} e^{2 \pi \|w\| \|f ^2\|} \right).$$
	In particular,  if $f \in O_F$ with $\|f\|^2 \ge 9$ then
	$$G(u, f) \le 4 \pi^2\left(e^{-(\pi-1/2)\|f\|^2}+ \frac{1}{2} e^{-\pi(1-2\|w\|-1/(2 \pi)) \|f\|^2} \right).$$
\end{proposition}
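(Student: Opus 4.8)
The plan is to unwind $G(u,f)=e^{-\pi\|f\|^2}G_2(u,f)/\|w\|^2$ and estimate $G_2(u,f)$ via the Taylor expansion of $e^t$, exploiting that $G_2$ is a symmetric sum over the conjugates of $f$. Write $u=(e^{x_0},e^{x_1},e^{x_2})$ with $(x_0,x_1,x_2)=(x,y,z)$, $\sum_i x_i=0$, and $f_i=\sigma^i(f)$ for $i\in\mathbb Z/3\mathbb Z$. A direct computation as in the proof of Lemma \ref{h0sym} shows $\|u\,\sigma^j(f)\|^2-\|\sigma^j(f)\|^2=b_j$ with $b_j:=\sum_i(e^{2x_i}-1)f_{i+j}^2$, so that $G_2(u,f)=\sum_{j=0}^{2}\bigl(e^{-\pi b_j}-1\bigr)$. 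Since $\sum_i x_i=0$, AM--GM gives $\sum_i e^{2x_i}\ge 3$, hence $\sum_j b_j=\bigl(\sum_i(e^{2x_i}-1)\bigr)\|f\|^2\ge 0$. We may assume $f\neq 0$ (for $f=0$ both sides vanish); then $\|f\|^2\ge 3$ by AM--GM and $|N(f)|\ge 1$, so $\|f^2\|^2=\sum_i f_i^4\ge\tfrac13(\sum_i f_i^2)^2=\tfrac13\|f\|^4\ge 3$, i.e. $\|f^2\|\ge\sqrt3$; we also record $\|f^2\|^2\le\|f\|^4$ (Cauchy--Schwarz).

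First I would use the elementary bounds $e^s-1-s\le\tfrac{s^2}{2}$ for $s\le 0$ and $e^s-1-s\le\tfrac{s^2}{2}e^{s}$ for $s\ge 0$ (the latter by comparing power series). Applied with $s=-\pi b_j$, and discarding the non‑positive term $-\pi\bigl(\sum_i(e^{2x_i}-1)\bigr)\|f\|^2$, this yields
\[
G_2(u,f)\;\le\;\frac{\pi^2}{2}\Bigl(\sum_{j:\,b_j\ge 0}b_j^2\;+\;\sum_{j:\,b_j<0}b_j^2\,e^{\pi|b_j|}\Bigr).
\]
Next I would note that if $b_j<0$ then, using $e^{2x_i}-1\ge 2x_i$ and discarding the coordinates with $x_i\ge 0$, $|b_j|=-b_j\le\sum_{i:\,x_i<0}(-2x_i)f_{i+j}^2\le 2\|w\|\,\|f^2\|$ by the Cauchy--Schwarz inequality over $i$; hence $e^{\pi|b_j|}\le e^{2\pi\|w\|\|f^2\|}$ for every negative $b_j$.

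The crux is the estimate of $\sum_j b_j^2$, for which I would use the cyclic identity: for reals $a_0,a_1,a_2\ge 0$,
\[
\sum_{j=0}^{2}\Bigl(\sum_i a_i f_{i+j}^2\Bigr)^2=(R_0-R_1)\sum_i a_i^2+R_1\Bigl(\sum_i a_i\Bigr)^2\le\|f^2\|^2\Bigl(\sum_i a_i\Bigr)^2,
\]
where $R_\ell=\sum_i f_i^2 f_{i+\ell}^2$, $R_0=\|f^2\|^2$, $R_0+2R_1=\|f\|^4$, $3R_0\ge\|f\|^4$ (so $R_0-R_1\ge 0$, $R_1\ge 0$, $(R_0-R_1)+R_1=\|f^2\|^2$), and the last step uses $\sum a_i^2\le(\sum a_i)^2$. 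Taking $a_i=\max(1-e^{2x_i},0)$ (which dominates $-b_j$ when $b_j<0$) and $a_i=\max(e^{2x_i}-1,0)$ (which dominates $b_j$ when $b_j\ge 0$), together with $\sum_i\max(1-e^{2x_i},0)\le 2\sum_{i:x_i<0}(-x_i)=:T$ and $\sum_i\max(e^{2x_i}-1,0)\le 2e^{2\|w\|}\sum_{i:x_i>0}x_i=e^{2\|w\|}T$ (from $1-e^{2x_i}\le-2x_i$, $e^{2x_i}-1\le 2x_i e^{2x_i}$, $x_i\le\|w\|$, $\sum x_i=0$), and with the elementary inequality $T^2\le\tfrac83\|w\|^2$ valid on $x+y+z=0$ (extremal at $(1,1,-2)$), one gets $\sum_{j:b_j<0}b_j^2\le\tfrac83\|w\|^2\|f^2\|^2$ and $\sum_{j:b_j\ge 0}b_j^2\le\tfrac83 e^{4\|w\|}\|w\|^2\|f^2\|^2$. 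Assembling, $G_2(u,f)\le\tfrac{4\pi^2}{3}\|w\|^2\|f^2\|^2\bigl(e^{4\|w\|}+e^{2\pi\|w\|\|f^2\|}\bigr)$, hence $G(u,f)\le\tfrac{4\pi^2}{3}\|f^2\|^2 e^{-\pi\|f\|^2}\bigl(e^{4\|w\|}+e^{2\pi\|w\|\|f^2\|}\bigr)$, and the first assertion follows once $\tfrac13 e^{4\|w\|}\le 1+\tfrac16 e^{2\pi\|w\|\|f^2\|}$: since $\|f^2\|\ge\sqrt3$ we have $e^{2\pi\|w\|\|f^2\|}\ge e^{2\pi\sqrt3\,\|w\|}$, and $r\mapsto 1+\tfrac16 e^{2\pi\sqrt3\,r}-\tfrac13 e^{4r}$ equals $\tfrac56$ at $r=0$ and is non‑decreasing on $[0,\infty)$ (its derivative is $\ge(\tfrac{2\pi\sqrt3}{6}-\tfrac43)e^{4r}\ge 0$ because $\pi\sqrt3>4$), hence positive. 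For the ``in particular'' part, when $\|f\|^2\ge 9$ I would use $\|f^2\|^2\le\|f\|^4$ and then $\|f\|^4\le e^{\|f\|^2/2}$ — equivalent to $4\log\|f\|^2\le\|f\|^2$, which holds at $\|f\|^2=9$ and whose two sides' difference is increasing for $\|f\|^2>4$ — and substitute into the first assertion, rewriting the exponents.

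The main obstacle is the bound on $\sum_j b_j^2$: estimating each $b_j$ individually and merely counting (at most three non‑negative, at most two negative) loses a factor $2$--$3$ that makes the closing inequality $\tfrac13 e^{4\|w\|}\le 1+\tfrac16 e^{2\pi\|w\|\|f^2\|}$ fail; the cyclic autocorrelation identity recovers this factor, and the sharp constant $\tfrac83$ in $T^2\le\tfrac83\|w\|^2$ (rather than the Cauchy--Schwarz value $3$) is exactly what closes the estimate — and closes it for all $\|w\|>0$, not only in the range $\|w\|<0.170856$ used later.
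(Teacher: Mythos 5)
Your proposal is correct, but it takes a genuinely different route through the middle of the argument than the paper does. The paper linearizes the exponent at the outset: using $e^t-1\ge t$ it replaces each $e^{2x_i}-1$ by $2x_i$, so that $G_1(u,\sigma^j(f))\le e^{-2\pi(xf_j^2+yf_{j+1}^2+zf_{j+2}^2)}-1$; then a single Taylor expansion, the Cauchy--Schwarz bound $|xf_0^2+yf_1^2+zf_2^2|\le\|w\|\,\|f^2\|$ applied to every term of order $\ge 2$, and the elementary estimate $1/k!\le\tfrac16\cdot 1/(k-2)!$ produce the tail factor $\tfrac43\pi^2\|w\|^2\|f^2\|^2\bigl(1+\tfrac12 e^{2\pi\|w\|\|f^2\|}\bigr)$ for each conjugate, while the linear terms cancel exactly upon summing over the three conjugates because $x+y+z=0$. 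This gives the stated constant directly and, notably, proves the first inequality for every $f$ without any arithmetic input. You instead keep the exact increments $b_j=\|u\sigma^j(f)\|^2-\|\sigma^j(f)\|^2$, use second-order bounds split according to the sign of $b_j$, and then need two extra ingredients the paper never requires: the cyclic autocorrelation identity giving $\sum_j\bigl(\sum_i a_if_{i+j}^2\bigr)^2\le\|f^2\|^2\bigl(\sum_i a_i\bigr)^2$, and the $\ell_1$--$\ell_2$ comparison $T^2\le\tfrac83\|w\|^2$ on the trace-zero plane; finally you must invoke $\|f^2\|\ge\sqrt3$ (i.e.\ $|N(f)|\ge1$ for $0\ne f\in O_F$) to absorb the residual factor $e^{4\|w\|}$ into the stated right-hand side, whereas the paper's bound needs no such absorption. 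Both arguments are valid, and your treatment of the ``in particular'' part ($\|f^2\|^2\le\|f\|^4\le e^{\|f\|^2/2}$ and $\|f^2\|\le\|f\|^2$ for $\|f\|^2\ge9$) coincides with the paper's. One small caveat on your closing commentary: the claim that the sharp constant $\tfrac83$ (rather than the Cauchy--Schwarz value $3$) is what closes the estimate is overstated --- with $3$ in place of $\tfrac83$ the inequality $\tfrac38e^{4r}\le 1+\tfrac18e^{2\pi\sqrt3\,r}$ still holds for all $r\ge0$, although your simple monotonicity argument would no longer establish it; this does not affect the correctness of the proof you gave, but the genuinely indispensable saving is the autocorrelation identity versus termwise counting, not the constant in $T^2\le\tfrac83\|w\|^2$.
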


\begin{proof}
	Since $ e^t -1 \ge t $ for all $t \in \mathbb{R}$, the following holds for all $x, y , z \in \mathbb{R}$.
	$$G_1(u,f) \le e^{-2 \pi ( x f_0^2 +  y f_1^2  + z f_2^2 )} -1.$$
	The Taylor expansion of $e^{-2 \pi ( x f_0^2 +  y f_1^2  + z f_2^2 )} -1$ provides that
	$$G_1(u,f) \le -2 \pi ( x f_0^2 +  y f_1^2  + z f_2^2 )  + \sum_{k \ge 2} \frac{1}{k!} \left[-2 \pi ( x f_0^2 +  y f_1^2  + z f_2^2 )\right]^k.$$
	Each term in the later sum can be bounded as below. 
	$$|x f_0^2 +  y f_1^2  + z f_2^2 | \le \sqrt{x^2+y^2+z^2} \sqrt{f_0^4+ f_1^4+ f_2^4} = \|w\| \|f^2\|.$$ 
	Thus, 
	$$G_1(u,f)\le -2 \pi ( x f_0^2 +  y f_1^2  + z f_2^2 )  +  \sum_{k \ge 2} \frac{1}{k!} \left( 2 \pi \|w\| \|f^2\| \right)^k.$$
	Since we can write $1/2 = 1/3 + 1/6$ and since $1/k! \le (1/6) [1/(k-2)!]$ for any $k \ge 3$, the last sum in $G_1(u,f)$ is less than or equal to
	$$\left( 2 \pi \|w\| \|f^2\| \right)^2 
	\left( \frac{1}{3} + \frac{1}{6} \sum_{k \ge 0} \frac{1}{k!} \left( 2 \pi \|w\| \|f^2\| \right)^{k}\right)=4 \pi^2 \|w\|^2 \|f^2\|^2 \left( \frac{1}{3} + \frac{1}{6} e^{ 2 \pi \|w\| \|f^2\| } \right).$$
	
	Therefore
	\begin{equation}\label{eq2c1}
		G_1(u,f)\le -2 \pi ( x f_0^2 +  y f_1^2  + z f_2^2 )  + \frac{4}{3} \pi^2 \|w\|^2 \|f^2\|^2
		\left( 1 + \frac{1}{2} e^{ 2 \pi \|w\| \|f^2\| } \right).
	\end{equation}

	Similarly, we obtain upper bounds for $G_1(u,\sigma(f)) $ and $G_1(u,\sigma^2(f)) $ as follows.
	\begin{equation}\label{eq2c2}
		G_1(u,\sigma(f)) \le -2 \pi ( x f_1^2 +  y f_2^2  + z f_0^2 )  + \frac{4}{3} \pi^2 \|w\|^2 \|f^2\|^2
		\left( 1 + \frac{1}{2} e^{ 2 \pi \|w\| \|f^2\| } \right)
	\end{equation}
	and 
	\begin{equation}\label{eq2c3}
		G_1(u,\sigma^2(f)) \le -2 \pi ( x f_2^2 +  y f_0^2  + z f_1^2 )  + \frac{4}{3} \pi^2 \|w\|^2 \|f^2\|^2
		\left( 1 + \frac{1}{2} e^{ 2 \pi \|w\| \|f^2\| } \right).
	\end{equation}
	In these bounds, we again use Lemma \ref{lengf} to replace $\|\sigma^2(f^2)\|$ and $\|\sigma(f^2)\|$ with $\|f^2\|$.  
	Taking the sum of the right hand side parts of \eqref{eq2c1}--\eqref{eq2c3} and using the condition that $x +y +z =0$, the following is implied. 
	$$G_2(u,f) = G_1(u,f) + G_1(u,\sigma(f)) + G_1(u,\sigma^2(f)) 
	\le 4 \pi^2 \|w\|^2 \|f^2\|^2 \left( 1 + \frac{1}{2} e^{ 2 \pi \|w\| \|f^2\| } \right).$$
	The first part of the proposition then follows since $G(f,u) = e^{-\pi \|f\|^2} G_2(u,f)/\|w\|^2$.
	The second part is obtained by using the fact that
	$$\|f^2\|^2 \le \|f\|^4 \le e^{\|f\|^2/2}  \text{ and } \|f^2\| \le \|f\|^2 \text{ for all } \|f\|^2 \ge 9.$$
\end{proof}

\begin{proposition}\label{Gat1}
Let $ u= (e^x, e^y, e^z) \text{ with } x+y + z = 0 \text{ and with } \|w\|^2 = x^2 + y^2 +z^2 \in (0, 0.170856^2)$. Then $T_1(u) \le -0.002652393$.
\end{proposition}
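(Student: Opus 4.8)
The plan is to compute $T_1(u) = 2G(u,1)$ directly, since the evaluation at $f=1$ is completely explicit: with $f_0 = f_1 = f_2 = 1$ and $\|1\|^2 = 3$ we have
$$G_1(u,1) = e^{-\pi[(e^{2x}-1) + (e^{2y}-1) + (e^{2z}-1)]} - 1 = e^{-\pi[e^{2x}+e^{2y}+e^{2z}-3]} - 1,$$
and the same expression arises for $G_1(u,\sigma(1))$ and $G_1(u,\sigma^2(1))$ because $\sigma$ fixes $1$; hence $G_2(u,1) = 3G_1(u,1)$ and
$$T_1(u) = 2G(u,1) = \frac{2e^{-3\pi}\,G_2(u,1)}{\|w\|^2} = \frac{6e^{-3\pi}\bigl(e^{-\pi(e^{2x}+e^{2y}+e^{2z}-3)} - 1\bigr)}{x^2+y^2+z^2}.$$
So the whole proposition reduces to a one-variable-style estimate: show that for $x+y+z=0$ and $0 < x^2+y^2+z^2 < 0.170856^2$, the quantity $\bigl(e^{-\pi(e^{2x}+e^{2y}+e^{2z}-3)}-1\bigr)/(x^2+y^2+z^2)$ is bounded above by a sufficiently negative constant, and then multiply by $6e^{-3\pi}$ and compare with $-0.002652393$.

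First I would set $s = e^{2x}+e^{2y}+e^{2z}-3$ and note that $s > 0$ whenever $(x,y,z)\neq(0,0,0)$ lies in the plane $x+y+z=0$ (by strict convexity of $t\mapsto e^{2t}$, or AM–GM applied to $e^{2x},e^{2y},e^{2z}$ whose product is $1$). Then $e^{-\pi s} - 1 < 0$, so $T_1(u) < 0$ automatically; the content is the \emph{quantitative} bound. To get it, I would Taylor-expand: $e^{2x}+e^{2y}+e^{2z}-3 = 2(x+y+z) + 2(x^2+y^2+z^2) + O(\|w\|^3) = 2\|w\|^2 + R$, where the linear term vanishes, and I would bound $R$ from below on the range $\|w\|^2 < 0.170856^2$ — specifically $e^{2t} \ge 1 + 2t + 2t^2$ for all $t$, so in fact $s \ge 2\|w\|^2$ exactly, with no error term needed. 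Therefore $e^{-\pi s} - 1 \le e^{-2\pi\|w\|^2} - 1$. Using the elementary inequality $e^{-a} - 1 \le -a + a^2/2$ (valid for $a \ge 0$), or more simply the fact that $\frac{e^{-a}-1}{a}$ is decreasing in $a$ so on $0 < a = 2\pi\|w\|^2 < 2\pi(0.170856)^2$ it is at most its limiting value at the right endpoint... actually the cleanest route: $\frac{e^{-2\pi\|w\|^2}-1}{\|w\|^2} = -2\pi\cdot\frac{1 - e^{-2\pi\|w\|^2}}{2\pi\|w\|^2} \le -2\pi \cdot \frac{1-e^{-2\pi r^2}}{2\pi r^2}$ where $r = 0.170856$, since $(1-e^{-a})/a$ is decreasing. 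This yields
$$T_1(u) \le 6e^{-3\pi}\cdot\frac{e^{-2\pi r^2}-1}{r^2}$$
and it remains only to check numerically that the right-hand side is $\le -0.002652393$ with $r^2 = 0.170856^2 \approx 0.029192$.

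The main obstacle is purely the bookkeeping of getting a clean lower bound $s \ge 2\|w\|^2$ and then handling the ratio $(e^{-2\pi\|w\|^2}-1)/\|w\|^2$ monotonically so that its supremum over the admissible range is attained at the boundary $\|w\| = 0.170856$; everything else is substitution. One subtlety worth double-checking is the direction of monotonicity — $t \mapsto (e^{-t}-1)/t$ is indeed decreasing on $(0,\infty)$ (its value runs from $-1$ down to $0$... wait, it runs from $-1$ up toward $0$, so it is \emph{increasing}), which means the \emph{most negative} value of $T_1(u)$ occurs as $\|w\| \to 0$ and the \emph{least negative} (hence the binding bound for an upper estimate) occurs at $\|w\| = r$; that is exactly the endpoint we want, so the inequality goes the right way. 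I would present the argument with $s \ge 2\|w\|^2$ from $e^{2t}\ge 1+2t+2t^2$, then the boundary evaluation, then a single line of arithmetic confirming $6e^{-3\pi}(1 - e^{-2\pi r^2})/r^2 \ge 0.002652393$.
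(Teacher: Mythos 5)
Your reduction of $T_1(u)$ to the explicit quantity $6e^{-3\pi}\bigl(e^{-\pi s}-1\bigr)/\|w\|^2$ with $s=e^{2x}+e^{2y}+e^{2z}-3$, and the monotonicity-plus-endpoint evaluation at $\|w\|=0.170856$, are exactly the paper's strategy (and you correctly keep the factor $e^{-\pi\|1\|^2}=e^{-3\pi}$). The genuine gap is your claimed lower bound $s\ge 2\|w\|^2$. The pointwise inequality $e^{2t}\ge 1+2t+2t^2$ is false for every $t<0$ (e.g.\ $t=-1$ gives $e^{-2}\approx 0.135<1$), and since $x+y+z=0$ forces at least one negative coordinate you cannot sum it over $x,y,z$. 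Worse, the aggregate inequality itself fails on the constraint plane: taking $(x,y,z)=(a,a,-2a)$ gives $s=2e^{2a}+e^{-4a}-3=12a^2-8a^3+O(a^4)$ while $2\|w\|^2=12a^2$, so $s<2\|w\|^2$ for all small $a>0$, i.e.\ arbitrarily close to the origin and well inside your range. The culprit is the cubic term $\tfrac{4}{3}(x^3+y^3+z^3)=4xyz$, which does not vanish on the plane $x+y+z=0$ and can be negative; the constant $2$ is simply not attainable, so the chain from $s\ge 2\|w\|^2$ onward collapses.

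The paper instead asserts the weaker bound $e^{2x}+e^{2y}+e^{2z}-3\ge 1.9\,\|w\|^2$, valid on the restricted range $0<\|w\|<0.170856$ (this is where that threshold comes from), and then gets $T_1(u)=2G(u,1)\le 6e^{-3\pi}\bigl(e^{-1.9\pi\|w\|^2}-1\bigr)/\|w\|^2\le -0.002652393$, the numeric target being exactly the endpoint value with constant $1.9$. So to repair your argument you must replace $2$ by a constant $c\ge 1.9$ that you actually prove on the region, which requires controlling the third-order (and higher) terms --- for instance bounding $|xyz|$ in terms of $\|w\|^3$ on the plane and using the smallness of $\|w\|$ --- rather than a single pointwise convexity inequality. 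Your remaining steps (monotonicity of $t\mapsto(e^{-ct}-1)/t$, evaluation at $\|w\|^2=0.170856^2$, final arithmetic) are fine once such a bound is in place.
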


\begin{proof}
It is true for any $ 0< \|w\|<0.170856 $ that
$$e^{2 x} + e^{2 y} +  e^{2 z}-3 \ge 1.9 (x^2 + y^2 + z^2)=1.9 \|w\|^2.$$
Consequently,
$$G_1(u,1)= e^{-\pi [ e^{2 x} + e^{2 y} +  e^{2 z}-3]} -1 \le e^{- 1.9 \pi \|w\|^2} -1.$$ 
Thus
$$G(u,1)=G_2(u,1)/\|w\|^2 = 3 G_1(u,1)/\|w\|^2 \le 3[e^{- 1.9 \pi \|w\|^2} -1]/\|w\|^2.$$
Since $0< \|w\|<0.170856$, we obtain that $G(u,1) \le -0.001326196$.
Therefore $T_1(u) = 2 G(u,1) \le -0.002652393$.
\end{proof}

\begin{proposition}\label{sum2}
Let $ u= (e^x, e^y, e^z) \text{ with } x+y + z = 0 \text{ and with } \|w\|^2 = x^2 + y^2 +z^2 \in (0, 0.170856^2)$. Then $T_2(u)<0.000461879$.
\end{proposition}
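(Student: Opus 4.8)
The plan is to bound $T_2(u)=\sum_{f\in O_F,\ \|f\|^2\ge 10} G(u,f)$ by first exploiting the Galois symmetry of $G$ and then applying the termwise Taylor estimate of Proposition~\ref{taylor1} together with the lattice tail bounds of Corollary~\ref{sum}. By Lemma~\ref{G} the summand $G(u,f)$ is constant on each Galois orbit $\{f,\sigma(f),\sigma^2(f)\}$; since $\Tr(f)=0$ forces $f\notin\mathbb{Z}$ for $f\ne 0$, and the only integer with $\|f\|^2\ge 10$ of smallest interest would be $f=\pm 2$ (with $\|f\|^2=12$), the orbit structure is essentially free except on $\mathbb{Z}$. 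But actually the cleaner route is not to group by orbits but simply to note $\|f\|^2\ge 10$ implies $\|f\|^2\ge 9$, so Proposition~\ref{taylor1} gives, for every such $f$,
\[
G(u,f)\le 4\pi^2\left(e^{-(\pi-1/2)\|f\|^2}+\tfrac12 e^{-\pi(1-2\|w\|-1/(2\pi))\|f\|^2}\right).
\]

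Next I would insert the bound $\|w\|<0.170856$ to make the second exponent explicit. One computes $\pi(1-2\cdot 0.170856-1/(2\pi)) = \pi - 2\pi\cdot 0.170856 - 1/2 = 1.568075\ldots$ (matching the third constant in Corollary~\ref{sum}), so
\[
G(u,f)\le 4\pi^2\left(e^{-(\pi-1/2)\|f\|^2}+\tfrac12 e^{-1.568075\|f\|^2}\right)
\]
for all $f\in O_F$ with $\|f\|^2\ge 10$. Summing over all such $f$ — which is summing over a subset of the lattice $O_F\subset\mathbb{R}^3$ whose shortest nonzero vector has length squared at least $2p/3\ge 14/3>3$ by Proposition~\ref{minlength} (so the hypothesis $\lambda^2\ge 3$ of Corollary~\ref{sum} is met) — gives
\[
T_2(u)\le 4\pi^2\left(\sum_{\substack{f\in O_F\\ \|f\|^2\ge 10}}e^{-(\pi-1/2)\|f\|^2}+\frac12\sum_{\substack{f\in O_F\\ \|f\|^2\ge 10}}e^{-1.568075\|f\|^2}\right)<4\pi^2\left(0.001\cdot10^{-6}+\tfrac12\cdot 23.399\cdot10^{-6}\right).
\]

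Finally I would check that $4\pi^2\left(0.001\cdot10^{-6}+11.6995\cdot10^{-6}\right)=4\pi^2\cdot 11.7005\cdot10^{-6}\approx 4.619\cdot10^{-4}<0.000461879$, which is the claimed bound. The one genuine subtlety — and the step I expect to need the most care — is making sure the constant $1.568075$ really dominates: since we only know $\|w\|<0.170856$ strictly, the exponent $\pi(1-2\|w\|-1/(2\pi))$ is strictly greater than $1.568075$, so replacing it by $1.568075$ is legitimate and weakens the bound in the safe direction; one should also verify $\pi-1/2>1.568075$ so that the first sum is indeed dominated by the $e^{-1.568075\|f\|^2}$ tail bound if one prefers to combine them, though here keeping the two sums separate and using the first and third estimates of Corollary~\ref{sum} directly is cleanest. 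The rest is the routine arithmetic of confirming $4\pi^2\cdot 11.7005\cdot 10^{-6}$ is below the stated threshold.
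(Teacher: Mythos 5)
Your proof follows essentially the same route as the paper's own: apply the second estimate of Proposition~\ref{taylor1} (valid since $\|f\|^2\ge 10\ge 9$), split $T_2(u)$ into the two tail sums, make the second exponent explicit from $\|w\|<0.170856$, and invoke the first and third bounds of Corollary~\ref{sum}. The only quibbles are cosmetic and shared with (or harmless relative to) the paper: the exponent at $\|w\|=0.170856$ is $\pi-2\pi(0.170856)-\tfrac12\approx 1.56807$ rather than $1.568075$; your appeal to Proposition~\ref{minlength} for the hypothesis of Corollary~\ref{sum} is misplaced since $\pm 1\in O_F$ have $\|{\pm}1\|^2=3$ exactly (which already suffices, as $\|f\|^2\ge 3|N(f)|^{2/3}\ge 3$ for all nonzero $f\in O_F$); and $4\pi^2\cdot 11.7005\cdot 10^{-6}\approx 0.00046192$ marginally exceeds $0.000461879$ --- none of which matters for how the proposition is used, since only $T_1(u)+T_2(u)+T_3(u)<0$ is needed.
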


\begin{proof}
By Proposition \ref{taylor1}, 
$$T_2(u) \le 4 \pi^2 \sum_{f \in O_F, \|f\|^2 \ge 10} 
\left(e^{-(\pi-1/2)\|f\|^2}+ \frac{1}{2} e^{-\pi(1-2\|w\|-1/(2 \pi)) \|f\|^2} \right).$$
The first sum is at most $0.001 \cdot 10^{-6}$ by Corollary \ref{sum}. 
Moreover,
$$\pi(1-2\|w\|+1/(2 \pi)) \ge 1.568074 \text{ since } \|w\|<0.170856.$$
 Hence, the second sum is bounded by
 $$\sum_{f \in O_F, \|f\|^2 \ge 10 } e^{- 1.568075\pi \| f\|^2}$$
which is at most $23.399 \cdot 10^{-6}$ (see Corollary \ref{sum}). Thus $T_2(u) \le 0.000461879$.

\end{proof}

\begin{proposition}\label{sum3}
Let $ u= (e^x, e^y, e^z) \text{ with } x+y + z = 0 \text{ and with } \|w\|^2 = x^2 + y^2 +z^2 \in (0, 0.170856^2)$. Then  $T_3(u)<0.00138339.$
\end{proposition}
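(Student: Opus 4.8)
The goal is to bound the finite sum $T_3(u)=\sum_{f} G(u,f)$ over $f\in O_F\setminus\{0,\pm1\}$ with $\|f\|^2<10$. Since $G(u,f)=G(u,\sigma(f))=G(u,\sigma^2(f))$ by Lemma \ref{G}, the $\sigma$-orbits partition the relevant set of $f$, and I only need to enumerate one representative per orbit and multiply its contribution by $3$ (orbits have size $3$ here because no nonzero $f$ with $\|f\|^2<10$ can be $\sigma$-fixed, as a fixed element lies in $\mathbb{Z}$). So the first step is a finite enumeration: using Proposition \ref{OF} (the explicit description $O_F=\mathbb{Z}\oplus\mathbb{Z}[\sigma]\cdot f_0$ or its index-$3$ overring) together with Proposition \ref{minlength} (which gives $\|g\|^2\ge 2p/3$ for $g\notin\mathbb{Z}$, hence for $p\ge 13$ there are no such $g$ at all with $\|g\|^2<10$ except none, and only $p\in\{7,9\}$ contribute), list all $f\in O_F\setminus\{0,\pm1\}$ with $\|f\|^2<10$ up to the $\sigma$-action and up to sign. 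In fact $\|f\|^2\ge 2p/3$ forces $p=7$ (where $2p/3\approx 4.67$) or $p=9$ (where $2p/3=6$), so the enumeration is over the two simplest cubic fields of conductor $7$ and $9$ only; in both cases one writes down the short lattice vectors explicitly from the known integral bases.

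\textbf{Bounding each orbit's contribution.} For each representative $f$ in the (short) list, I apply Proposition \ref{taylor1}, which gives
$$G(u,f)\le 4\pi^2\|f^2\|^2 e^{-\pi\|f\|^2}\left(1+\tfrac12 e^{2\pi\|w\|\|f^2\|}\right).$$
Using the constraint $\|w\|<0.170856$, the exponential factor $e^{2\pi\|w\|\|f^2\|}$ is uniformly bounded (with $\|f^2\|\le\|f\|^2<10$ this exponent is at most $2\pi\cdot 0.170856\cdot 10<10.74$, but for the actual short vectors $\|f^2\|$ is much smaller), so each $G(u,f)$ is bounded by an explicit constant depending only on $\|f\|^2$ and $\|f^2\|^2$. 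Summing $3$ times these constants over the finitely many orbit representatives gives the claimed numerical bound $T_3(u)<0.00138339$. The contributions are dominated by the shortest vectors (those with $\|f\|^2=2p/3$), and the bound is obtained by direct summation; I would present this as a short table listing, for each orbit representative, $\|f\|^2$, $\|f^2\|^2$, and the resulting upper bound on $3\,G(u,f)$.

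\textbf{Main obstacle.} The only delicate point is making the enumeration of $\{f\in O_F:\ 0<\|f\|^2<10,\ f\notin\mathbb{Z}\}$ genuinely complete for $p=7$ and $p=9$. One must be careful that the lattice $O_F\subset\mathbb{R}^3$ has rank $3$ and the condition $\|f\|^2<10$ cuts out a ball whose lattice points must all be accounted for — not merely the hexagonal sublattice $K=\ker(\Tr)$ but also the coset(s) coming from $\mathbb{Z}$ and, in the index-$3$ case, from the extra generator $\mathsf{x}=[1\pm(f_0-\sigma f_0)]/3$. Here Proposition \ref{minlength} does the heavy lifting: since every $g\in O_F\setminus\mathbb{Z}$ has $\|g\|^2\ge 2p/3$, and since translating by $\mathbb{Z}$ only increases $\|g\|^2$ once the $\mathbb{Z}$-component is nonzero beyond a point, the list of candidates with $\|f\|^2<10$ is small and can be pinned down exactly. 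Once the list is fixed, everything else is a routine finite computation via Proposition \ref{taylor1}.
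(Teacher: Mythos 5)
Your overall strategy (finite enumeration of the short vectors, the orbit symmetry $G(u,f)=G(u,\sigma(f))=G(u,\sigma^2(f))$, then termwise bounds) matches the paper's, but there are two concrete gaps. First, your reduction to conductors $p\in\{7,9\}$ is wrong: Proposition \ref{minlength} only rules out conductors with $2p/3\ge 10$, and $p=13$ has $2p/3=26/3\approx 8.67<10$. Indeed, for $p=13$ there are six elements $\pm g,\pm\sigma(g),\pm\sigma^2(g)$ with $\|g\|^2=9$ and $\|g^2\|^2=53$, and this case must be (and in the paper is) treated, via Proposition \ref{taylor1}; your enumeration silently drops it. Only $p\ge 19$ forces $T_3(u)=0$.

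Second, and more seriously, your plan to bound every orbit by Proposition \ref{taylor1} fails for $p=7$. There the short vectors are $\pm\theta,\pm\sigma(\theta),\pm\sigma^2(\theta),\pm(1+\theta),\pm\sigma(1+\theta),\pm\sigma^2(1+\theta)$ with $\|\theta\|^2=5$, $\|\theta^2\|^2=13$, and the Taylor bound gives only $G(u,\theta)\le 4\pi^2\cdot 13\, e^{-5\pi}\bigl(1+\tfrac12 e^{2\pi(0.170856)\sqrt{13}}\bigr)\approx 1.9\cdot 10^{-3}$, so already $6\,G(u,\theta)\gtrsim 1.2\cdot 10^{-2}$, an order of magnitude above the target $0.00138339$ (the paper's remark after the proof makes exactly this point). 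For $p=7$ the paper instead substitutes the explicit coordinates of $\theta$ and $1+\theta$ into the definition of $G(u,f)$ and maximizes $T_3(u)=6[G(u,\theta)+G(u,1+\theta)]$ over the constraint region directly. As written, your argument therefore does not establish the claimed bound: you need a finer, non-Taylor estimate for the conductor-$7$ field, and you must add the conductor-$13$ field to the enumeration (where the Taylor bound does suffice).
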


\begin{proof}
In case $p \ge 19$, Proposition \ref{minlength} says that $\|f\|^2 \ge 13$ for all $f \in O_F\backslash \mathbb{Z}$. Therefore $T_3(u) =0$.

Now we consider the case in which $ p \le 13$. It is easy to find all vectors $f \in O_F$ for which $\|f\|^2 < 10$ using
 an LLL-reduced basis of the lattice $O_F$ (see Section 12 in \cite{ref:1}) or by applying the Fincke--Pohst algorithm (see Algorithm 2.12 in \cite{ref:40}). 
 
If $p=13$ then there are 6 vectors $f \in O_F$ for which $\|f\|^2 < 10$. They have the forms $ \pm g, \pm \sigma(g), \pm \sigma^2(g)$ with $\|g\|^2= 9$ and $\|g^2\|^2= 53$. Applying Proposition \ref{taylor1} leads to
$$T_3(u)= 6G(u, g) \le 24 \pi^2 \|g^2\|^2 e^{-\pi \|g\|^2} \left(1 + \frac{1}{2} e^{2 \pi \|w\| \|g ^2\|} \right)$$
$$= 24 \pi^2( 53)  e^{- 9 \pi } \left(1 + \frac{1}{2} e^{2 \pi (0.170856) \sqrt{53} } \right) <0.00138339.$$

Similarly, one can show that $T_3(u) < 0.00138339$  in case  $p=9$.

Finally, if $p=7$ then $F$ is the splitting field of the polynomial $X^3-X^2-2X+1$. Let $\theta$ be a root of this polynomial. There are 12 vectors $f \in O_F$ for which $\|f\|^2 < 10$. Those are  
$$ \pm \theta, \pm \sigma(\theta), \pm \sigma^2(\theta),  \pm (1+\theta), \pm \sigma(1+\theta), \pm \sigma^2(1+\theta)$$
 with 
 $$\|\theta\|^2=5,  \|1+\theta\|^2=6;  \|\theta^2\|^2= 13, \|(1+\theta)^2\|^2= 26.$$
We have $T_3(u)= 6[G(u, \theta) + G(u, 1+\theta)]$. 
Substitute the coordinates of $\theta$ to the formulas of $G(u,\theta)$  and $G(u,1+\theta)$  in \eqref{eqG} and find the maximum of $T_3(u)$ with the conditions in the proposition, we obtain  that $T_3(u)< 0.00138339$.
\end{proof}

\begin{remark}
In the proof of Proposition \ref{sum3}, we do not use Proposition \ref{taylor1} when $p=7$. The reason for this is that the upper bound it provides for $G(u, \theta) $ and $G(u, 1+\theta)$ and hence for $T_3(u)$ are too large to show that $T_1(u) + T_2(u) + T_3(u) <0$ (used in the proof of Theorem \ref{thmmain} in case \ref{case2d}).
\end{remark}

\section{Previous and further work}\label{sec5}
\subsection{A comparison to previous work}
Here we give a summary of the similarities and the differences between this work and the previous work  \cite{ref:14,ref:15} and \cite{Tran2}.

The overall structure of our proof is similar to that of previous work. In particular, we consider separately the cases where $I$ is principal and where it is not, and in the later case, we subdivide further based on the relative length of $w$.
Both in our work and in previous work, the case where $I$ is not principal is handled by using the fact that the squared length of any vector in the lattice associated to $D$ is at leat $n \cdot 2 ^{n/2}$ where $n$ is the degree of the number field (see Section \ref{case1}).
The proofs are also structurally similar in the case where $I$ is principal and $w$ is not too short, in this case we use bounds on the size of the fundamental unit as well as bounds on the number of short vectors of the lattice associated to $D$ (see Remark \ref{lambdahexan}, Proposition \ref{sum3} and  Corollary \ref{sum}).

The major difference between our proof and those appearing in previous work arises because in the present case the unit group has rank two whereas in previous cases it was one.
In the case $I$ is principal and $w$ is short the strategy of the prior work was to apply the standard theory of optimizing single variable differentiable functions, that is to check derivative conditions.
In our case, we must do more work. Indeed, when $I$ is principal and $w$ is short, we proved directly that  $h^0(D)< h^0(D_0)$. In order to do this, we had to make use of explicit information about the structure of $O_F$ to get a lower bound on the lengths of vectors in $O_F\backslash \mathbb{Z}$ based on the conductor of $F$. In addition, the Galois-invariance of $h^0(D)$, the symmetry of  $G_2(u,f)$ and the Taylor expansion of the function $e^t$ (see Propositions \ref{equiv} and \ref{taylor1}) are all employed in the proof.
In the case $I$ is principal and $w$ is not too short, we had to exploit again explicit information about the structure of $\Lambda$, namely that it is a hexagonal lattice, to obtain an upper bound on $S_1$ (see Lemma \ref{Bomega}), in previous work, because this lattice had rank one, this entire question was figured out easier. 
\subsection{Further work}
It is natural to question whether our method can be applied to other number fields $F$ which satisfy the hypothesis of the conjecture mentioned in Section \ref{sec1a}. Indeed, with the notations in the earlier sections, it still works in the case in which $I$ is not principal (see Section \ref{case1} and in \cite{ref:14,ref:15}, \cite{Tran2}) by Proposition 4.4 in \cite{ref:21}. 
In addition, when $I$ is principal and $w$ is short, involving few cumbersome estimations and modifications according to the degree $n$ of $F$, one can also prove this conjecture using the same method presented in Section \ref{case2d}. 

However, our method may fall short in being applied in other cases. That is because it requires a good knowledge of the structure of the unit lattice $\Lambda$ such as its $\mathbb{Z}$-basis, the length of its shortest vectors as well the points in $\Lambda$ close to a given point in its fundamental domain (see Lemma \ref{Bomega}), together with an efficient bound on the number of vectors of length bounded in the lattice $O_F$. Since these are not always known for $F$, a further research addressing a new method may be needed.

\section*{Acknowledgement}
The author would like to thank  Ren\'{e} Schoof for discussion and very useful comments and Camilla Hollanti for her great hospitality during the time a part of this paper was written. The author also would like to thank the reviewers and Andrew Fiori for their insightful comments that helped improve the manuscript. 

The author is financially supported by the Academy of Finland (grants  $\#$276031, $\#$282938, and $\#$283262). Support from the Pacific Institute for the Mathematical Sciences (PIMS), and the European Science Foundation under the COST Action IC1104 is also gratefully acknowledged.




\end{document}